\definecolor{darkblue}{rgb}{0.1,0.1,0.75}
\newlength\aftertitskip     \newlength\beforetitskip
\newlength\interauthorskip  \newlength\aftermaketitskip
\def\maketitle{\par
 \begingroup
   \def\thefootnote{\fnsymbol{footnote}}
   \def\@makefnmark{\hbox to 4pt{$^{\@thefnmark}$\hss}}
   \@maketitle \@thanks
 \endgroup
\setcounter{footnote}{0}
 \let\maketitle\relax \let\@maketitle\relax
 \gdef\@thanks{}\gdef\@author{}\gdef\@title{}\let\thanks\relax}
\def\@startauthor{\noindent \normalsize\bf}
\def\@endauthor{}
\def\@starteditor{\noindent \small {\bf Editor:~}}
\def\@endeditor{\normalsize}
\def\@maketitle{\vbox{\hsize\textwidth
 \linewidth\hsize \vskip \beforetitskip
 {\begin{center} \LARGE\@title \par \end{center}} \vskip \aftertitskip
 {\def\and{\unskip\enspace{\rm and}\enspace}%
  \def\addr{\small\it}%
  \def\email{\hfill\small\tt}%
  \def\name{\normalsize\bf}%
  \def\AND{\@endauthor\rm\hss \vskip \interauthorskip \@startauthor}
  \@startauthor \@author \@endauthor}
}}
\newcommand{\nfh}{\nicefrac{1}{2}}
\newcommand{\Mc}{\mathcal{M}}
\newcommand{\Xc}{\mathcal{X}}
\newcommand{\Bc}{\mathcal{B}}
\newcommand{\Tc}{\mathcal{T}}
\newcommand{\inv}[1]{#1^{-1}}
\newcommand{\nlsum}{\sum\nolimits}
\newcommand{\reals}{\mathbb{R}}
\newcommand{\rplus}{\reals_+}
\newcommand{\pd}{\mathbb{P}}
\newcommand{\riem}{\delta_R}
\newcommand{\gm}{{\#}}
\newcommand{\dist}{d}
\newcommand{\da}{\downarrow}
\newcommand{\ua}{\uparrow}
\newcommand{\ip}[2]{\langle {#1},\, {#2} \rangle}
\newcommand{\norm}[1]{\|{#1}\|}
\newcommand{\frob}[1]{\|{#1}\|_{\text{F}}}
\newcommand{\half}{\tfrac{1}{2}}
\newcommand{\set}[1]{\{ #1\}}
\newcommand{\rfw}{\textsc{Rfw}\xspace}
\newcommand{\efw}{\textsc{Efw}\xspace}
\newcommand{\fw}{\textsc{Fw}\xspace}
\DeclareMathOperator*{\argmin}{argmin}
\DeclareMathOperator*{\argmax}{argmax}
\DeclareMathOperator{\sgn}{sgn}
\DeclareMathOperator{\trace}{tr}
\DeclareMathOperator{\Exp}{Exp}
\DeclareMathOperator{\grad}{grad}
\newcommand{\matlab}{\textsc{Matlab}\xspace}
\renewcommand{\H}{\mathbb{H}}
\newtheorem{theorem}{Theorem}[section]
\newtheorem{lemma}[theorem]{Lemma}
\newtheorem{proposition}[theorem]{Proposition}
\theoremstyle{definition}
\newtheorem{defn}[theorem]{Definition}
\newtheorem{ass}{Assumption}
\newtheorem{rmk}[theorem]{Remark}
\numberwithin{equation}{section}
\title{Riemannian Optimization via Frank-Wolfe Methods}
\author{\name Melanie Weber \email{mw25@math.princeton.edu}\\
  \addr{Princeton University}\\
  \name Suvrit Sra \email{suvrit@mit.edu}\\
  \addr{Laboratory for Information and Decision Systems, MIT}
}
\begin{document}
\maketitle

\begin{abstract}
We study projection-free methods for constrained Riemannian optimization. In particular, we propose the Riemannian Frank-Wolfe (\rfw) method. We analyze non-asymptotic convergence rates of \rfw to an optimum for (geodesically) convex problems, and to a critical point for nonconvex objectives. We also present a practical setting under which \rfw can attain a linear convergence rate. 
As a concrete example, we specialize \rfw to the manifold of positive definite matrices and apply it to two tasks: (i) computing the matrix geometric mean (Riemannian centroid); and (ii) computing the Bures-Wasserstein barycenter. 
Both tasks involve geodesically convex interval constraints, for which we show that the Riemannian  ``linear'' oracle required by \rfw admits a closed form solution; this result may be of independent interest. We further specialize  \rfw to the special orthogonal group, and show that here too, the Riemannian ``linear'' oracle can be solved in closed form. Here, we describe an application to the synchronization of data matrices (\emph{Procrustes problem}). 
We complement our theoretical results with an empirical comparison of \rfw against state-of-the-art Riemannian optimization methods, and observe that \rfw performs competitively on the task of computing Riemannian centroids.
\end{abstract}

\section{Introduction}
We study the following constrained optimization problem
\begin{equation}
  \label{eq:0}
  \min_{x \in \Xc \subseteq \Mc}\quad \phi(x),
\end{equation}
where $\phi: \Mc \to \reals$ is a differentiable function and $\Xc$ is a compact g-convex subset of a Riemannian manifold $\Mc$. The objective $\phi$ may be geodesically convex (henceforth, \emph{g-convex}) or nonconvex. 
When the constraint set $\Xc$ is ``simple'' one may solve~\eqref{eq:0} via Riemannian projected-gradient. But in many cases, projection onto $\Xc$ can be expensive to compute, motivating us to seek \emph{projection-free} methods.

Euclidean ($\Mc \equiv \reals^n$) projection-free methods based on the Frank-Wolfe (\textsc{Fw}) scheme~\citep{fw_original} have recently witnessed a surge of interest in machine learning and related fields~\citep{jaggi2013revisiting,julien15}. Instead of projection, such \textsc{Fw} methods rely on access to a \emph{``linear'' oracle,} that is, a subroutine that solves the problem
\begin{equation}
  \label{eq:lo}
  \min_{z\in \Xc}\quad \ip{z}{\nabla\phi(x)},
\end{equation}
that can sometimes be much simpler than projection onto $\Xc$. The attractive property of \fw methods has been exploited in a variety of settings, including convex~\citep{jaggi2013revisiting,bach2015duality}, nonconvex~\citep{LJ16}, submodular~\citep{fujiso11,caliChVon11} and stochastic~\citep{hazan2016variance,reddi2016stochastic}; among numerous others. 

But as far as we are aware, \fw methods have not been studied for Riemannian manifolds. Our work fills this gap in the literature by developing, analyzing, and experimenting with Riemannian Frank-Wolfe (\rfw) methods. 
In addition to adapting \fw to the Riemannian setting, there is one more challenge that we must overcome: \rfw requires access to a Riemannian analog of the linear oracle~\eqref{eq:lo}, which can be hard even for g-convex problems.

Therefore, to complement our theoretical analysis of \rfw, we discuss in detail practical settings that admit efficient Riemannian ``linear'' oracles. Specifically, we discuss problems where $\Mc=\pd_d$, the manifold of (Hermitian) positive definite matrices, and $\Xc$ is a g-convex semidefinite interval; then problem~\eqref{eq:0} assumes the form
\begin{equation}
  \label{eq:8}
  \min_{X \in \Xc \subseteq \pd_d}\quad \phi(X),\qquad\text{where}\ \Xc := \set{X \in \pd_d \mid L \preceq X \preceq U},
\end{equation}
where $L$ and $U$ are positive definite matrices. An important instance of~\eqref{eq:8} is the following g-convex optimization problem (see \S\ref{sec:mgm} for details and notation):
\begin{equation}
  \label{eq:21}
  \min_{X \in \pd_d}\quad\sum_{i=1}^n w_i\riem^2(X,A_i),\quad\text{where}\ w \in \Delta_n, A_1,\ldots,A_n \in \pd_d,
\end{equation}
which computes the Riemannian centroid of a set of positive definite matrices (also known as the ``matrix geometric mean'' and the ``Karcher mean'')~\citep{bhatia07,karcher,LL14}. We will show that \rfw offers a simple approach to solve~\eqref{eq:21} that performs competitvely recently published state-of-the-art approaches. As a second application, we show that \rfw allows for an efficient computation of Bures-Wasserstein barycenters on the Gaussian density manifold. 

Furthermore, to offer an example on a matrix manifold other than positive definite matrices, we show that the Riemannian ``linear'' oracle can be solved in closed form for the special orthogonal group too. Specifically, we use this oracle to obtain an efficient \rfw based approach for the task of synchronizing data matrices, a classic problem better known as the \emph{Procrustes problem}.

\vskip12pt
\noindent\textbf{Summary of results}. The key contributions of this paper are as follows:
\begin{enumerate}
\item We introduce a Riemannian Frank-Wolfe (\rfw) algorithm for addressing constrained g-convex optimization on Riemannian manifolds.  We show that \rfw attains a \emph{non-asymptotic} $O(1/k)$ rate of convergence (Theorem~\ref{thm:conv-FWR}) to the optimum, $k$ being the number of iterations. Furthermore, under additional assumptions on the objective function and the constraints, we show that \rfw can even attain linear convergence rates (Theorem~\ref{thm:lin-conv-FWR}). In the nonconvex case, \rfw attains a \emph{non-asymptotic} $O(1/\sqrt{k})$ rate of convergence to first-order critical points (Theorem~\ref{thm:conv-FWR-nonconvex}). These rates are comparable to the best known guarantees for the classical Euclidean Frank-Wolfe algorithm~\cite{jaggi2013revisiting,julien15}.
\item While the Euclidean ``linear'' oracle is a convex problem, the Riemannian ``linear'' oracle is nonconvex.  Therefore, the key challenge of developing \rfw lies in efficiently solving the ``linear'' oracle. We address this problem with the following contributions:
\begin{itemize}
\item We specialize \rfw for g-convex problems of the form~\eqref{eq:8} on the manifold of Hermitian positive definite (HPD) matrices. Most importantly, for this problem we develop a closed-form solution to the Riemannian ``linear'' oracle,  which involves solving a nonconvex semi-definite program (\emph{SDP}), see Theorem~\ref{thm:logtrace}.  We then apply \rfw to computing the Riemannian mean of HPD matrices. In comparison with state-of-the-art methods, we observe that \rfw   performs competitively. Furthermore, we implement \rfw for the computation of Bures-Wasserstein barycenters on the Gaussian density manifold. 
\item We specialize \rfw for optimization on the manifold of special orthogonal matrices, where the Riemannian ``linear'' oracle can be solved in closed form too. Notably, this problem is an instance of \rfw being applicable to a setting with nonconvex constraints. We present an application to the classical Procrustes problem.
\item We show that we can recover a sublinear convergence rate, even if the Riemannian ``linear'' oracle can only be solved approximately, e.g., using relaxations or iterative solvers. This makes the approach applicable to a wide range of constrained optimization problems.
\end{itemize}
\end{enumerate}
We believe the closed-form solutions for the Riemannian ``linear'' oracle, which involve a nonconvex SDP, respectively, should be of wider interest too. A similar approach can be used to solve the Euclidean linear oracle, a convex SDP, in closed form (Appendix~\ref{sec:e-lo}). These oracles lie at the heart of the competitive performance of \rfw and \fw in comparison with state-of-the-art methods for computing the Riemannian centroid of HPD matrices (Section~\ref{sec:expts}). More broadly, we hope that our results encourage others to study \rfw as well as other examples of problems with efficient Riemannian ``linear'' oracles.\\

\vskip5pt
\noindent\textbf{Related work}. Riemannian optimization has a venerable history. The books~\citep{udriste1994convex,absil2009optimization} provide a historical perspective as well as basic theory.  The focus of these books and of numerous older works on Riemannian optimization, e.g.,~\citep{edelman98,helmke2007essential,ring2012optimization}, is almost exclusively on asymptotic analysis. More recently, non-asymptotic  convergence analysis quantifying iteration complexity of Riemannian optimization algorithms has begun to be pursued~\citep{ZS16,boumal2016global,bento2017iteration}. However, to our knowledge, all these works either focus on \emph{unconstrained} Riemannian optimization, or handle constraints via projections. In contrast, we explore constrained g-convex optimization within an abstract \rfw framework, by assuming access to a Riemannian ``linear'' oracle.  Several applications of Riemannian optimization are known, including to matrix factorization on fixed-rank manifolds~\citep{vandereycken2013low,tan2014riemannian}, dictionary learning \citep{cherian2015riemannian,sun2015complete}, classical optimization under orthogonality constraints \citep{edelman98}, averages of rotation matrices~\citep{moakher2002means}, elliptical distributions in statistics~ \citep{zhang2013multivariate,sra2013geometric}, and Gaussian mixture models \cite{hoSr15b}. Explicit theory of g-convexity on HPD matrices is studied in~\citep{SH15}. Additional related work corresponding to the Riemannian mean of HPD matrices is discussed in Section~\ref{sec:mgm}.


\section{Background}
We begin by noting some useful background and notation from Riemannian geometry. For a deeper treatment we refer the reader to~\citep{jost,chavel2006riemannian}.

A smooth \emph{manifold} $\mathcal{M}$ is a locally Euclidean space equipped with a differential structure. At any point $x \in \mathcal{M}$, the set of tangent vectors forms the \emph{tangent space} $T_x\Mc$. Our focus is on \emph{Riemannian manifolds}, i.e., smooth manifolds with a smoothly varying inner product $\ip{\xi}{\eta}_x$ defined on the $T_x\Mc$ at each point $x\in \Mc$. We write $\|\xi\|_x := \sqrt{\ip{\xi}{\xi}_x}$ for $\xi\in T_x\Mc$; for brevity, we will drop the subscript on the norm whenever the associated tangent space is clear from context. Furthermore, we assume $\Mc$ to be \emph{complete},  which ensures that the following map is defined on the whole tangent space:
We define the \emph{exponential map} as a mapping from $T_x\Mc$ to $\Mc$; it is defined by $\Exp_x: T_x \mathcal{M} \rightarrow \mathcal{M}$ such that $y=\Exp_x(g_x) \in \mathcal{M}$
along a geodesic $\gamma : [0,1] \rightarrow \mathcal{M}$ with $\gamma(0)= x$, $\gamma(1)=y$ and $\dot{\gamma}(0)=g_x \in T_x\Mc$. We can define an \emph{inverse} exponential map $\Exp_x^{-1}: \mathcal{M} \rightarrow T_x \mathcal{M}$ as a diffeomorphism from the neighborhood of $x \in \mathcal{M}$ onto the neighborhood of $0 \in T_x \mathcal{M}$ with $\Exp_x^{-1}(x)=0$. Note, that the completeness of $\mathcal{M}$ ensures that both maps are well-defined.

Since tangent spaces are local notions, one cannot directly compare vectors lying in different tangent spaces. To tackle this issue, we use the concept of \emph{parallel transport}: the idea is to transport (map) a tangent vector along a geodesic to the respective other tangent space. More precisely, let $x, y \in \mathcal{M}$ with $x \neq y$. We transport $g_x \in T_x \mathcal{M}$ along a geodesic $\gamma$ (where $\gamma(0)=x$ and $\gamma(1)=y$) to the tangent space $T_y\Mc$; we denote this by $\Gamma_x^y g_x$. Importantly, the inner product on the tangent spaces is preserved under parallel transport, so that $\ip{\xi_x}{\eta_x}_x = \ip{\Gamma_x^y \xi_x}{\Gamma_x^y \eta_x}_y$, 
where $\xi_x, \eta_x \in T_x \mathcal{M}$, while $\ip{\cdot}{\cdot}_x$ and  $\ip{\cdot}{\cdot}_y$ are the respective inner products.

\subsection{Gradients, convexity, smoothness}
Recall that the \emph{Riemannian gradient} $\grad\phi(x)$ is the unique vector in $T_x\Mc$ such that the directional derivative
\begin{equation*}
  D\phi(x)[v] = \ip{\grad\phi(x)}{v}_x,\qquad\forall v \in T_x\Mc.
\end{equation*}
When optimizing functions using gradients, it is useful to impose some added structure. The two main properties we require are sufficiently smooth gradients and geodesic convexity. We say $\phi: \mathcal{M} \rightarrow \mathbb{R}$ is \emph{$L$-smooth}, or that it has \emph{$L$-Lipschitz} gradients, if
\begin{equation}
  \label{eq:17}
  \|\grad\phi(y)-\Gamma_x^y\grad\phi(x)\| \le L d(x,y) \qquad \forall\ x, y \in \Mc; \; g_x \in T_x\Mc \; ,
\end{equation}
where $d(x,y)$ is the geodesic distance between $x$ and $y$; equivalently, 
\begin{equation}
\label{eq:L}
  \phi(y) \leq \phi(x) + \ip{g_x}{\Exp_x^{-1} (y)}_x + \tfrac{L}{2} d^2(x,y) \qquad \forall x,y \in \mathcal{M}; \; g_x \in T_x\Mc \; .
\end{equation}
We say $\phi: \mathcal{M} \rightarrow \mathbb{R}$ is \emph{geodesically convex} (\emph{g-convex}) if
\begin{equation}
\phi(y) \geq \phi (x) + \ip{g_x}{\Exp_x ^{-1}(y)}_x \qquad \forall x,y \in \mathcal{M} ; \; g_x \in T_x\Mc \; ,
\end{equation}
and call it \emph{$\mu$-strongly g-convex} ($\mu \ge 0$) if
\begin{equation}
\phi(y) \geq \phi(x) + \ip{g_x}{\Exp_x ^{-1}(y)}_x + \tfrac{\mu}{2} d^2(x,y) \qquad \forall x,y \in \mathcal{M} ; \; g_x \in T_x\Mc \; .
\end{equation}

The following observation underscores the reason why g-convexity is a valuable geometric property for optimization.
\begin{proposition}[Optimality]
  \label{prop.opt}
  Let $x^* \in\Xc\subset \Mc$ be a local optimum for~\eqref{eq:0}. Then, $x^*$ is globally optimal, and $\ip{\grad\phi(x^*)}{\Exp_{x^*}^{-1}(y)}_{x^*} \ge 0$ for all $y\in \Xc$.
\end{proposition}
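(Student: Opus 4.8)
The plan is to prove the two assertions in sequence: derive the first-order (variational) inequality from local optimality, and then feed that inequality into the g-convexity inequality to upgrade local to global optimality. Both steps reduce to a one-dimensional computation along a geodesic, so the only structural inputs I need are that $\Xc$ is g-convex (hence the connecting geodesic stays inside $\Xc$) and the chain-rule identity $D\phi(x)[v] = \ip{\grad\phi(x)}{v}_x$ recalled just above the statement.

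First I would establish the variational inequality. Fix an arbitrary $y \in \Xc$ and let $\gamma:[0,1]\to\Mc$ be the geodesic with $\gamma(0)=x^*$ and $\gamma(1)=y$; g-convexity of $\Xc$ guarantees $\gamma(t)\in\Xc$ for all $t\in[0,1]$. Define $h(t) := \phi(\gamma(t))$. Since $x^*$ is a local optimum and $\gamma(t)\to x^*$ as $t\downarrow 0$, we have $h(t)\ge h(0)$ for all sufficiently small $t>0$, whence the one-sided derivative satisfies $h'(0^+)\ge 0$. By the definition of the Riemannian gradient, $h'(0) = D\phi(x^*)[\dot\gamma(0)] = \ip{\grad\phi(x^*)}{\dot\gamma(0)}_{x^*}$, and since $\dot\gamma(0)=\Exp_{x^*}^{-1}(y)$ this gives $\ip{\grad\phi(x^*)}{\Exp_{x^*}^{-1}(y)}_{x^*}\ge 0$, which is the optimality inequality asserted.

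Second, global optimality follows immediately. Applying the g-convexity inequality at $x=x^*$ to the same arbitrary $y$ yields $\phi(y) \ge \phi(x^*) + \ip{\grad\phi(x^*)}{\Exp_{x^*}^{-1}(y)}_{x^*}$. The variational inequality just proved makes the inner-product term nonnegative, so $\phi(y)\ge \phi(x^*)$; as $y\in\Xc$ was arbitrary, $x^*$ is a global minimizer over $\Xc$.

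The argument is essentially routine once the geometric setup is in place, so I do not anticipate a substantive obstacle. The points that need care are: existence, uniqueness, and feasibility of the geodesic from $x^*$ to $y$ (this is precisely where g-convexity of $\Xc$ and the Hadamard structure of $\Mc$ enter, making $\Exp_{x^*}^{-1}(y)$ well defined and keeping the segment in $\Xc$), and differentiability of $h$ at $0$ so that $h'(0^+)\ge 0$ is legitimate, which follows from differentiability of $\phi$ and smoothness of $\gamma$. One should also fix the convention for the inner product in the statement: I read it as the directional derivative of $\phi$ at $x^*$ along the feasible direction pointing toward $y$, namely $\ip{\grad\phi(x^*)}{\Exp_{x^*}^{-1}(y)}_{x^*}$, which is exactly the quantity the geodesic argument controls (note that replacing this by a pairing against the reverse direction $\Exp_y^{-1}(x^*)$ transported to $T_{x^*}\Mc$ flips the sign, so the direction convention is the one bookkeeping detail worth stating explicitly).
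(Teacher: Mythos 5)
Your proof is correct and is precisely the ``follows readily from the definitions'' argument the paper leaves implicit: the one-sided derivative along a feasible geodesic gives the variational inequality, and the g-convexity inequality then upgrades local to global optimality. Your reading of the direction convention --- that the pairing should be $\ip{\grad\phi(x^*)}{\Exp_{x^*}^{-1}(y)}_{x^*}\ge 0$ rather than against $\Exp_{y}^{-1}(x^*)$ as literally printed --- is the right one, since it is exactly the quantity minimized by the Riemannian linear oracle in \eqref{eq:16} and used in the convergence proof of Theorem~\ref{thm:conv-FWR}.
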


\subsection{Projection-free vs.\ Projection-based methods}
The growing body of literature on Riemannian optimization considers mostly projection-based methods, such as \emph{Riemannian Gradient Decent} (\textsc{RGD}) or \emph{Riemannian Steepest Decent} (\textsc{RSD})~\citep{absil2009optimization}. 
Such methods and their convergence guarantees typically require Lipschitz assumptions. However, the objectives of many classic optimization and machine learning tasks are not Lipschitz on the whole manifold. In such cases, an additional compactness argument is required.  However, in projection-based methods, the typically used retraction back onto the manifold may not be guaranteed to land in this compact set.  Thus, additional and potentially expensive work (e.g., an additional projection step) is needed to ensure that the update remains in the compact region where the gradient is Lipschitz. On the other hand, projection-free methods, such as \fw, bypass this issue, because their update is guaranteed to stay within the compact feasible region. Importantly, in some problems, the Riemannian linear oracle at the heart of \fw can be much less expensive than computing a projection back onto the compact set.  A detailed numerical study, comparing the complexity of projections with that of computing linear minimizers in the Euclidean case, can be found in~\cite{COMBETTES2021565}.
The efficiency linear minimizers is especially significant for the applications highlighted in this paper, where the linear oracle even admits a closed form solution (see sections~\ref{sec:r-lo} and~\ref{sec:SO(n)-LO}).

\subsection{Constrained Optimization in Riemannian Space}
\label{sec:constrained-examples}
Many applications in machine learning, statistics and data science involve constrained optimization tasks, which can benefit from a Riemannian perspective.  A large body of literature has considered the problem of translating a constrained Euclidean optimization problem into an \emph{unconstrained} Riemannian problem, by encoding the primary constraint in the manifold structure. However, oftentimes, a problem has additional constraints,  requiring a Riemannian approach to constrained optimization.  We list below notable examples, including those that will be covered in the application section of the paper.

Examples on the manifold of positive definite matrices include the computation of Riemannian centroid (with interval constraints, see section~\ref{sec:mgm.real}) and learning determinantal point processes (with interval constraints~\cite{Mariet2015FixedpointAF}).  A related problem is that of computing Wasserstein-Barycenters on the Bures manifold (wirh interval constraints, see section~\ref{sec:Bures}).  The $k$-means clustering algorithm corresponds to an optimization task on the Stiefel manifold with equality and inequality constraints~\cite{villar}.  Non-negative PCA can be computed on the sphere with equality constraints~\cite{montanari}.  The synchronization of data matrices can be written as an optimization task on the manifold of the orthogonal group with a determinant constraint (see section~\ref{sec:SO(d)}).  Computing a minimum balanced cut for graph bisection can be computed on the Oblique manifold with quadratic equality constraints~\cite{liu2019simple}.

\section{Riemannian Frank-Wolfe}
The condition $\ip{\grad\phi(x^*)}{\Exp_{x^*}^{-1}(y)}_{x^*} \ge 0$ for all $y\in \Xc$ in Proposition~\ref{prop.opt} lies at the heart of Frank-Wolfe (also known as ``conditional gradient'') methods. In particular, if this condition is not satisfied, then there must be a \emph{feasible descent direction} --- \fw schemes seek such a direction and update their iterates~\citep{fw_original,jaggi2013revisiting}. This high-level idea is equally valid in the Riemannian setting. Algorithm~\ref{alg.fw} recalls the basic (Euclidean) \fw method which solves $\min_{x\in \Xc}\phi(x)$, and Algorithm~\ref{alg.rfw} introduces its Riemannian version \rfw, obtained by simply replacing Euclidean objects with their Riemannian counterparts.
In the following, $\ip{\cdot}{\cdot}$ will denote $\ip{\cdot}{\cdot}_{x_k}$ unless otherwise specified.
\begin{algorithm}[H]
  \caption{Euclidean Frank-Wolfe without line-search}
  \label{alg.fw}
  \begin{algorithmic}[1]
    \State Initialize with a feasible point $x_0 \in \Xc \subset \reals^n$
    \For{$k=0,1,\ldots$}
      \State Compute $z_k \gets \argmin_{z \in \Xc} \ip{\nabla\phi(x_k)}{z-x_k}$
      \State Let $s_k \gets \frac{2}{k+2}$
      \State Update $x_{k+1} \gets (1-s_k)x_k + s_kz_k$
    \EndFor
  \end{algorithmic}
\end{algorithm}
Notice that to implement Algorithm~\ref{alg.fw}, $\Xc$ must be compact. Convexity ensures that after the update in Step 5, $x_{k+1}$ remains feasible, while compactness ensures that the \emph{linear oracle} in Step 3 has a solution. To obtain \rfw, we first replace the linear oracle (Step 3 in Algorithm~\ref{alg.fw}) with the \emph{Riemannian ``linear oracle''}:
\begin{equation}
  \label{eq:16}
  \min_{z \in \Xc}\quad \ip{\grad\phi(x_k)}{\Exp_{x_k}^{-1}(z)},
\end{equation}
where now $\Xc$ is assumed to be a compact g-convex set. Similarly, observe that Step 5 of Algorithm~\ref{alg.fw} updates the current iterate $x_k$ along a straight line joining $x_k$ with $z_k$. Thus, by analogy, we replace this step by moving $x_k$ along a geodesic joining $x_k$ with $z_k$. The resulting \rfw algorithm is presented as Alg~\ref{alg.rfw}.
\begin{algorithm}[htbp]
  \caption{Riemannian Frank-Wolfe (\rfw) for g-convex optimization}
  \label{alg.rfw}
  \begin{algorithmic}[1] 
     \State Initialize $x_0 \in \Xc \subseteq \mathcal{M}$; assume access to the geodesic map $\gamma: [0,1]\to \Mc$
     \For {$k=0,1,\dots$}
        \State $z_k \gets \argmin_{z \in \Xc}\ \ip{\grad\phi(x_k)}{\Exp_{x_k}^{-1}(z)}$
        \State Let $s_k \gets \frac{2}{k+2}$
        \State $x_{k+1} \gets \gamma(s_k)$, where $\gamma(0)=x_k$ and $\gamma(1)=z_k$
     \EndFor
   \end{algorithmic}
\end{algorithm}
While we obtained Algorithm~\ref{alg.rfw} purely by analogy, we must still show that this analogy results in a valid algorithm. In particular, we need to show that Algorithm~\ref{alg.rfw} converges to a solution of~\eqref{eq:0}. We will in fact prove a stronger result that \rfw converges globally at the rate $O(1/k)$, i.e., $\phi(x_k)-\phi(x^*) = O(1/k)$, which matches the rate of the usual Euclidean \fw method.

\subsection{Convergence analysis}
We make the following smoothness assumption:
\begin{ass}[Smoothness]
  \label{A.smooth}
  The objective $\phi$ has a locally Lipschitz continuous gradient on $\Xc$, that is, there exists a constant $L$ such that for all $x, y \in \Xc$ we have
  \begin{equation}
  \label{eq:3}
  \norm{\grad\phi(x)-\Gamma_y^x\grad\phi(y)} \le L \; \dist{(x,y)}.
\end{equation}
\end{ass}
Next, we introduce a quantity that will play a central role in the convergence rate of \rfw, namely the \emph{curvature constant} 
\begin{equation}
  \label{eq:defm}
  M_\phi := \sup_{\substack{x,y,z \in \Xc}} 
  \tfrac{2}{\eta^2}\left[\phi(y) - \phi(x) - \ip{\grad \phi(x)}{\Exp_x ^{-1}(y)} \right] \; .
\end{equation}
An analogous quantity is used in the analysis of the Euclidean \fw~\cite{jaggi2013revisiting}. In the following we adapt proof techniques from~\cite{jaggi2013revisiting} to the Riemannian setting.
Here and in the following, $y=\gamma(\eta)$ for some $\eta \in [0,1]$ and a geodesic map $\gamma:[0,1] \rightarrow \Mc$ with $\gamma(0)=x$ and $\gamma(1)=z$ (denoted in the following as $\gamma_{xy}$).
Lemma~\ref{lem:bound-on-M} relates the curvature constant~\eqref{eq:defm} to the Lipschitz constant $L$.
\begin{lemma}
\label{lem:bound-on-M}
Let $\phi: \Mc \to \reals$ be $L$-smooth on $\Xc$, and let $\mathrm{diam}(\Xc) := \displaystyle\sup_{x,y \in \Xc}\dist{(x,y)}$. Then, 
the curvature constant $M_\phi$ satisfies the bound
\begin{equation*}
  M_{\phi} \leq L \; {\rm diam}(\Xc)^2.
\end{equation*}
\end{lemma}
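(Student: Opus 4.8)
The plan is to unpack the definition of the curvature constant~\eqref{eq:defm} and then apply the equivalent form~\eqref{eq:L} of $L$-smoothness. First I would pin down the role of $\eta$: although the supremum is written over $x,y,z\in\Xc$, the intended reading (matching the Euclidean curvature constant of Frank--Wolfe) is that $y$ is the point reached by moving a step-length $\eta\in(0,1]$ along the geodesic from $x$ toward $z$, i.e. $y=\Exp_x\!\big(\eta\,\Exp_x^{-1}(z)\big)$. With this parametrization two facts hold by construction: $\Exp_x^{-1}(y)=\eta\,\Exp_x^{-1}(z)$, and since geodesics are parametrized proportionally to arc length, $\dist(x,y)=\eta\,\dist(x,z)$.

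The core estimate comes from $L$-smoothness. Because $\phi$ is $L$-smooth on $\Xc$ (Assumption~\ref{A.smooth}), the descent-type inequality~\eqref{eq:L} applies to the pair $x,y\in\Xc$, giving
\begin{equation*}
  \phi(y) - \phi(x) - \ip{\grad\phi(x)}{\Exp_x^{-1}(y)} \;\le\; \tfrac{L}{2}\,\dist^2(x,y).
\end{equation*}
Substituting $\dist(x,y)=\eta\,\dist(x,z)$ into the right-hand side yields the bound $\tfrac{L}{2}\,\eta^2\,\dist^2(x,z)$, so that the quantity inside the supremum in~\eqref{eq:defm} satisfies
\begin{equation*}
  \tfrac{2}{\eta^2}\Big[\phi(y)-\phi(x)-\ip{\grad\phi(x)}{\Exp_x^{-1}(y)}\Big]
  \;\le\; \tfrac{2}{\eta^2}\cdot\tfrac{L}{2}\,\eta^2\,\dist^2(x,z)
  \;=\; L\,\dist^2(x,z).
\end{equation*}
The factor $\eta^2$ cancels cleanly, which is exactly why the curvature constant is finite. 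Finally I would bound $\dist(x,z)\le\mathrm{diam}(\Xc)$ uniformly over $x,z\in\Xc$ and take the supremum, obtaining $M_\phi\le L\,\mathrm{diam}(\Xc)^2$.

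The proof is short and the only genuine subtlety—hence the step I would flag as the main obstacle—is the correct interpretation of the auxiliary variable $\eta$ and the justification that $\dist(x,y)=\eta\,\dist(x,z)$ and $\Exp_x^{-1}(y)=\eta\,\Exp_x^{-1}(z)$ hold when $y$ lies on the geodesic from $x$ to $z$ at parameter $\eta$. Once this geodesic parametrization is made explicit, the cancellation of $\eta^2$ and the application of~\eqref{eq:L} are routine, and no curvature-specific comparison (e.g. Hadamard manifold estimates) is needed because the bracketed term is already controlled directly by the smoothness inequality.
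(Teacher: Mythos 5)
Your proposal is correct and follows essentially the same route as the paper's proof: parametrize $y=\gamma_{xz}(\eta)$ so that $\dist(x,y)=\eta\,\dist(x,z)$ and $\Exp_x^{-1}(y)=\eta\,\Exp_x^{-1}(z)$, apply the smoothness inequality~\eqref{eq:L}, cancel the $\eta^2$, and bound by the diameter. If anything, your write-up is the more careful of the two, since it makes explicit the geodesic interpretation of $\eta$ that the paper's definition~\eqref{eq:defm} leaves implicit.
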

\begin{proof}
Let $x, z \in \Xc$ and $\eta \in (0,1)$; let $y=\gamma_{xz}(\eta)$ be a point on the geodesic joining $x$ with $z$. This implies $\frac{1}{\eta^2} \dist{(x,y)}^2 = \dist{(x,z)}^2$. From~\eqref{eq:L} we know that $\norm{\phi(z) - \phi(x) - \ip{\grad \phi(x)}{\Exp_x ^{-1} (z)}}^2 \leq \frac{L}{2} \dist{(x,z)}^2$, 
whereupon using the definition of the curvature constant we obtain
\begin{equation}
M_{\phi} \leq \sup \frac{2}{\eta^2} \frac{L}{2} \dist{(x,y)}^2 = \sup L \; \dist{(x,z)}^2 \leq L \cdot {\rm diam} (\Xc)^2 \; .
\end{equation}
\end{proof}

We note below an analog of the Lipschitz inequality~\eqref{eq:L} using the constant $M_\phi$.
\begin{lemma}[Lipschitz]
  \label{lem:lip-bound-gen}
  Let $x, y, z \in \Xc$ and $\eta \in [0,1]$ with $y=\gamma_{xz}(\eta)$. Then,
  \begin{align*}
    \phi(y) \leq \phi(x) + \eta \ip{\grad \; \phi(x)}{\Exp_x^{-1}(z)} + \tfrac{1}{2} \eta^2 M_{\phi} \; .
  \end{align*}
\end{lemma}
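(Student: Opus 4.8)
The plan is to read the inequality almost directly off the definition of the curvature constant $M_\phi$ in~\eqref{eq:defm}, after reconciling one discrepancy between the two expressions. Unpacking the supremum, the definition immediately gives, for any fixed $x,z \in \Xc$ and $\eta \in (0,1)$ with $y = \gamma_{xz}(\eta)$, the pointwise bound
\begin{equation*}
  \phi(y) - \phi(x) - \ip{\grad\phi(x)}{\Exp_x^{-1}(y)} \le \tfrac{1}{2}\eta^2 M_\phi.
\end{equation*}
Comparing this with the target inequality, the sole difference is that the middle term here carries $\Exp_x^{-1}(y)$, whereas the statement has $\eta\,\ip{\grad\phi(x)}{\Exp_x^{-1}(z)}$. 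Thus the entire proof reduces to verifying the identity $\Exp_x^{-1}(y) = \eta\,\Exp_x^{-1}(z)$ and substituting.

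First I would recall that the geodesic joining $x$ to $z$ is parametrized as $\gamma_{xz}(t) = \Exp_x(t\,v)$, where $v = \Exp_x^{-1}(z) = \dot{\gamma}_{xz}(0) \in T_x\Mc$ is the initial velocity. Since $y = \gamma_{xz}(\eta) = \Exp_x(\eta v)$, applying the inverse exponential map yields $\Exp_x^{-1}(y) = \eta v = \eta\,\Exp_x^{-1}(z)$; this is just the positive homogeneity of the exponential map along a fixed geodesic ray. Substituting into the pointwise bound above produces exactly the claimed inequality. The boundary case $\eta = 0$ is trivial (then $y = x$ and both sides collapse to $\phi(x) \le \phi(x)$), and $\eta = 1$ (where $y = z$) follows either by a limiting argument or by continuity of $\phi$ and $\Exp_x^{-1}$.

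The argument is short and its only substantive ingredient is the homogeneity identity for $\Exp_x^{-1}$, so I do not expect a genuine obstacle. The one point requiring care is that $M_\phi$ is defined through a supremum over $\Xc$ in which $\eta$ appears implicitly: I must confirm that the particular triple $(x,y,z)$ with $y = \gamma_{xz}(\eta)$ is admissible in that supremum, and that the $\eta$ appearing in~\eqref{eq:defm} is identified with the geodesic parameter used here. Reading the definition in the way that its own use in the proof of Lemma~\ref{lem:bound-on-M} intends --- namely that $y$ is always the point $\gamma_{xz}(\eta)$ for the corresponding $\eta$ --- this consistency is automatic, and the bound transfers with no loss.
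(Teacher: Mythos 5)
Your proposal is correct and follows essentially the same route as the paper's proof: unpack the definition of $M_\phi$ to get $\phi(y) \le \phi(x) + \ip{\grad\phi(x)}{\Exp_x^{-1}(y)} + \tfrac{1}{2}\eta^2 M_\phi$, then use the homogeneity identity $\Exp_x^{-1}(y) = \eta\,\Exp_x^{-1}(z)$ for $y = \gamma_{xz}(\eta)$ and substitute. Your extra remarks on the boundary cases and on how $\eta$ is implicitly quantified in the definition of $M_\phi$ are sensible clarifications of a point the paper leaves informal, but they do not change the argument.
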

\begin{proof}
  From definition~\eqref{eq:defm} of the constant $M_\phi$ we see that
  \begin{align*}
    M_{\phi} \geq \tfrac{2}{\eta^2} \left( \phi(y) - \phi(x) - \ip{\grad \; \phi(x)}{\Exp_{x}^{-1}(y)}	\right),
  \end{align*}
  which we can rewrite as
  \begin{equation}
    \label{eq:23}
\phi(y) \leq \phi(x) + \ip{\grad \; \phi(x)}{\Exp_{x}^{-1}(y)} + \tfrac{1}{2} \eta^2 M_{\phi}.
\end{equation}
Furthermore, since $y=\gamma_{xz}(\eta)$, we have $\Exp_x^{-1} (y) = \eta \Exp_x^{-1} (z)$, and therefore
  \begin{align*}
    \ip{\grad \phi(x)}{\Exp_x^{-1}(y)} &= \ip{\grad \phi(x)}{\eta \Exp_x^{-1}(z)} = \eta \ip{\grad \phi(x)}{\Exp_x^{-1}(z)}.
  \end{align*}
  Plugging this equation into~\eqref{eq:23} the claim follows.
\end{proof}

We need one more technical lemma (easily verified by a quick induction).
\begin{lemma}[Stepsize for \textsc{RFW}]
  \label{lem:induction}
  Let $(a_k)_{k \in I}$ be a nonnegative sequence that fulfills
  \begin{equation}
    \label{eq:4}
    a_{k+1} \le (1-s_k)a_k + \half s_k^2 M_{\phi}.
  \end{equation}
  If $s_k = \frac{2}{(k+2)}$, then, $a_k \le \frac{2M_{\phi}}{(k+2)}$.
\end{lemma}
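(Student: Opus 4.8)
The plan is to establish the bound $a_k \le \frac{2M_\phi}{k+2}$ by a straightforward induction on $k$, which is the classical Frank–Wolfe argument adapted verbatim; the explicit choice $s_k = \frac{2}{k+2}$ turns each step into elementary arithmetic. The only point that needs a moment's thought is how to seed the induction without any hypothesis on $a_0$.

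First I would dispose of the base case by exploiting that $s_0 = \frac{2}{0+2} = 1$. Evaluating the recurrence~\eqref{eq:4} at $k=0$ then gives $a_1 \le (1-s_0)a_0 + \half s_0^2 M_\phi = \half M_\phi$, because the coefficient $(1-s_0)$ vanishes and the unknown initial value $a_0$ drops out entirely. Since $M_\phi \ge 0$ (take $y=x$ in the definition~\eqref{eq:defm}), we have $\half M_\phi \le \frac{2M_\phi}{3} = \frac{2M_\phi}{1+2}$, so the claim holds at $k=1$.

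For the inductive step I would assume the bound at index $k$ and substitute it, together with $s_k=\frac{2}{k+2}$, into~\eqref{eq:4}:
\[
  a_{k+1} \le \Big(1 - \tfrac{2}{k+2}\Big)\frac{2M_\phi}{k+2} + \half\Big(\tfrac{2}{k+2}\Big)^2 M_\phi = \frac{2M_\phi(k+1)}{(k+2)^2}.
\]
It then suffices to verify $\frac{k+1}{(k+2)^2} \le \frac{1}{k+3}$, which upon cross-multiplication is the trivial inequality $(k+1)(k+3) = k^2+4k+3 \le k^2+4k+4 = (k+2)^2$. This gives $a_{k+1}\le \frac{2M_\phi}{k+3}$ and closes the induction.

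I do not foresee any real obstacle: the whole argument is elementary. The two things to be careful about are phasing the induction so that the base case uses $s_0 = 1$ to annihilate the undetermined $a_0$, and recording that $M_\phi \ge 0$ so that the inequalities are oriented correctly throughout.
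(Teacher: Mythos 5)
Your proof is correct and follows essentially the same route as the paper's: the base case uses $s_0=1$ to eliminate $a_0$ and obtain $a_1\le\half M_\phi\le\frac{2M_\phi}{3}$, and the inductive step reduces to the inequality $(k+1)(k+3)\le(k+2)^2$, exactly as in the paper. No issues.
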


We are now ready to state our first main convergence result, Theorem \ref{thm:conv-FWR} that establishes a \emph{global} iteration complexity for \rfw.
\begin{theorem}[Rate]
  \label{thm:conv-FWR}
  Let $s_k = \frac{2}{k+2}$, and let $X^*$ be a minimum of $\phi$. Then, the sequence of iterates $X_k$ generated by Algorithm~\ref{alg.rfw} satisfies  $\phi(X_k)-\phi(X^*) = O(1/k)$.
\end{theorem}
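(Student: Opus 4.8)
The plan is to reduce the convergence claim to the one-step contraction recursion already handled by Lemma~\ref{lem:induction}. Define the optimality gap $a_k := \phi(X_k) - \phi(X^*)$, which is nonnegative since $X^*$ minimizes $\phi$ over $\Xc$. I aim to establish the inequality $a_{k+1} \le (1-s_k)a_k + \half s_k^2 M_\phi$, after which Lemma~\ref{lem:induction} immediately yields $a_k \le 2M_\phi/(k+2) = O(1/k)$, giving the stated rate. Finiteness of $M_\phi$, which is what makes this bound meaningful, follows from Lemma~\ref{lem:bound-on-M} together with compactness of $\Xc$.

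To derive the recursion I would first apply the Lipschitz-type bound of Lemma~\ref{lem:lip-bound-gen} with $x = X_k$, with $z = Z_k$ the output of the linear oracle in Step 3, with $\eta = s_k$, and with $y = X_{k+1} = \gamma(s_k)$, giving
\[
  \phi(X_{k+1}) \le \phi(X_k) + s_k \ip{\grad\phi(X_k)}{\Exp_{X_k}^{-1}(Z_k)} + \half s_k^2 M_\phi.
\]
Next I would bound the linear term in two stages. Since $Z_k$ minimizes $z \mapsto \ip{\grad\phi(X_k)}{\Exp_{X_k}^{-1}(z)}$ over $\Xc$ and $X^* \in \Xc$ is feasible, we have $\ip{\grad\phi(X_k)}{\Exp_{X_k}^{-1}(Z_k)} \le \ip{\grad\phi(X_k)}{\Exp_{X_k}^{-1}(X^*)}$. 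Then g-convexity applied at $x = X_k$, $y = X^*$ gives $\ip{\grad\phi(X_k)}{\Exp_{X_k}^{-1}(X^*)} \le \phi(X^*) - \phi(X_k) = -a_k$. Substituting both bounds into the displayed inequality and subtracting $\phi(X^*)$ from each side produces exactly $a_{k+1} \le a_k - s_k a_k + \half s_k^2 M_\phi = (1-s_k)a_k + \half s_k^2 M_\phi$, as desired.

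The routine parts are this algebra and the final invocation of Lemma~\ref{lem:induction}; the conceptually delicate point is ensuring the two inequalities on the linear term are genuinely valid in the Riemannian setting. The oracle-optimality step relies on $X^*$ being feasible (guaranteed because it minimizes over $\Xc$) and on $\Exp_{X_k}^{-1}$ being well-defined across all of $\Xc$ — this is where the Hadamard structure (globally unique geodesics, diffeomorphic exponential map) does the essential work, so that $\Exp_{X_k}^{-1}(X^*)$ and $\Exp_{X_k}^{-1}(Z_k)$ exist and the linear oracle really is minimizing a well-defined function on $\Xc$. The g-convexity step is then a direct application of the defining inequality. Thus the only real obstacle beyond bookkeeping is checking these geometric well-definedness conditions; once they are in place, the classical Euclidean Frank-Wolfe descent argument transfers essentially verbatim, with $\Exp_{X_k}^{-1}$ playing the role of the Euclidean difference $z - x$.
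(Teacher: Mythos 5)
Your proof is correct and follows essentially the same route as the paper's: the Lipschitz bound of Lemma~\ref{lem:lip-bound-gen}, the oracle-optimality comparison of $Z_k$ against $X^*$, the g-convexity inequality, and then Lemma~\ref{lem:induction} applied to $a_k = \phi(X_k)-\phi(X^*)$. Your added remarks on the well-definedness of $\Exp_{X_k}^{-1}$ on a Hadamard manifold and the finiteness of $M_\phi$ are sensible housekeeping but do not change the argument.
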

\begin{proof}
  The proof of this claim is relatively straightforward; indeed
  \begin{align*}
    &\phi(X_{k+1}) - \phi(X^*)\\
    &\le \phi(X_k) - \phi(X^*) + s_k\ip{\grad \phi(X_k)}{\Exp_{X_k} ^{-1}(Z_k)} + \half s_k^2M_\phi\\
    &\le \phi(X_k) - \phi(X^*) + s_k\ip{\grad \phi(X_k)}{\Exp_{X_k} ^{-1}(X^*)} + \half s_k^2M_\phi\\
    &\le \phi(X_k)-\phi(X^*) - s_k(\phi(X_k)-\phi(X^*)) + \half s_k^2M_\phi\\
    &=(1-s_k)(\phi(X_k)-\phi(X^*)) + \half s_k^2M_\phi,
  \end{align*}
  where the first inequality follows from Lemma~\ref{lem:lip-bound-gen}, while the second one from $Z_k$ being an $\argmin$ obtained in Step~3 of the algorithm. The third inequality follows from g-convexity of $\phi$. 
  Setting $a_k=\phi(X_k)-\phi(X^*)$ in Lemma~\ref{lem:induction} we immediately obtain 
  $$
  \phi(X_k)-\phi(X^*) \le \frac{2M_\phi}{k+2},\qquad k\ge 0,
  $$
  which is the desired $O(1/k)$ convergence rate.
\end{proof}

Theorem~\ref{thm:conv-FWR} provides a global sublinear convergence rate for \rfw. Typically, \fw methods trade off their simplicity for this slower convergence rate, and even for smooth strongly convex objectives they do not attain linear convergence rates~\citep{jaggi2013revisiting}. We study in Section~\ref{sec:linear} a setting that permits \rfw to attain a linear rate of convergence.

\subsection{Linear convergence of RFW}
\label{sec:linear}
In general, the sublinear convergence rate that we derived in the previous section is best-possible for Frank-Wolfe methods. This is due to the following phenomenon, which has been studied extensively in the Euclidean setting~\citep{canon-cullum,wolfe}: If the optimum lies on the boundary of the constraint set $\Xc$, then the \fw updates will ``zig-zag'',  resulting in a slower convergence. In this case, the upper bound on the global convergence rate is tight. If, however, the optimum lies in the strict interior of the constraint set, Euclidean \fw is known to converge at a linear rate \citep{marcotte,GH15}.  
Remarkably, under a similar assumption, \rfw also displays global linear convergence, which we will formally prove below (Theorem~\ref{thm:lin-conv-FWR}). Notably,  for the special case of the \emph{geometric matrix mean} that we analyze in the next section, this strict interiority assumption will always be valid, provided that not all the matrices are the same.  

We will make use of a Riemannian extension to the well-known Polyak-Łojasiewicz (PL) inequality~\citep{polyak63,losj63}, which we define below. 
Consider the minimization $$\min_{x\in \Mc} f(x),$$ and let $f^{*}$ be the optimal function value. We say that $f$ satisfies the PL inequality if for some $\mu > 0$,
\begin{equation}
  \tfrac{1}{2} \norm{\grad f(x)}^2 \geq \mu \left(f(x)-f^{*}\right) \quad \forall x,y \in \Mc.
  \label{eq:PL}
\end{equation}
Inequality~\eqref{eq:PL} is weaker than strong convexity (and is in fact implied by it). It has been 
widely used for establishing linear convergence rates of gradient-based methods; see  \citep{KNS16} for several (Euclidean) examples, and \citep{rsvrg} for a Riemannian example.
We will make use of inequality~\eqref{eq:PL} for obtaining linear convergence of \rfw, by combining it with a strict interiority condition on the minimum.
\begin{theorem}[Linear convergence RFW]
  \label{thm:lin-conv-FWR}
  Suppose that $\phi$ is strongly g-convex with constant $\mu$ and that its minimum lies in a ball of radius $r$ that strictly inside the constraint set $\Xc$. Define $\Delta_k := \phi(X_k)-\phi(X^*)$ and let the step-size $s_k = \frac{r\sqrt{\mu \Delta_k}}{\sqrt{2}M_\phi}$. Then, \rfw converges linearly since it satisfies the bound 
  \begin{equation*}
    \Delta_{k+1} \le \left(1 - \frac{r^2\mu}{4M_\phi}\right)\Delta_k.
    \label{eq:linear-FWR}
  \end{equation*}
\end{theorem}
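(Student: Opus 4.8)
The plan is to combine the descent inequality of Lemma~\ref{lem:lip-bound-gen} with a lower bound on the Frank-Wolfe gap that exploits the strict interiority of the minimizer, and then to invoke the PL inequality~\eqref{eq:PL}, which follows from $\mu$-strong g-convexity. First I would apply Lemma~\ref{lem:lip-bound-gen} with $x=X_k$, $z=Z_k$, $\eta=s_k$, and $y=X_{k+1}=\gamma(s_k)$; subtracting $\phi(X^*)$ yields
\[
  \Delta_{k+1} \le \Delta_k + s_k\ip{\grad\phi(X_k)}{\Exp_{X_k}^{-1}(Z_k)} + \tfrac12 s_k^2 M_\phi.
\]
Writing the Frank-Wolfe gap as $g_k := -\ip{\grad\phi(X_k)}{\Exp_{X_k}^{-1}(Z_k)}$ and noting $g_k \ge 0$ (take $z=X_k$ in the oracle, so the minimal value is $\le 0$), this becomes $\Delta_{k+1}\le \Delta_k - s_k g_k + \tfrac12 s_k^2 M_\phi$.

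Second, and this is the crux, I would lower-bound $g_k$. Since $Z_k$ minimizes the linear oracle, $g_k \ge -\ip{\grad\phi(X_k)}{\Exp_{X_k}^{-1}(z)}$ for every feasible $z$. Using the interiority hypothesis—the minimum lies in a ball of radius $r$ strictly inside $\Xc$—I would construct a feasible competitor $z$ lying within distance $r$ of $X^*$ whose inverse-exponential image at $X_k$ carries a component of size $r$ along the negative gradient direction. In the Euclidean template this is $z = x^* - r\,\grad\phi(x_k)/\norm{\grad\phi(x_k)}$, and the linear split $x_k-z=(x_k-x^*)+(x^*-z)$ separates the gap into a convexity term $\ge \Delta_k$ and a term $r\norm{\grad\phi(x_k)}$, giving $g_k \ge r\norm{\grad\phi(X_k)}$. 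The Riemannian adaptation must replace this additive split by a comparison inequality for $\Exp_{X_k}^{-1}(\cdot)$ on the Hadamard manifold, which is the genuine difficulty.

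Third, I would invoke the PL inequality $\tfrac12\norm{\grad\phi(X_k)}^2 \ge \mu\Delta_k$ (implied by $\mu$-strong g-convexity, by minimizing the strong-convexity lower bound over the tangent direction and evaluating at $X^*$), hence $\norm{\grad\phi(X_k)} \ge \sqrt{2\mu\Delta_k}$ and $g_k \ge r\sqrt{2\mu\Delta_k}$. Substituting the prescribed step size $s_k = r\sqrt{\mu\Delta_k}/(\sqrt2 M_\phi)$ makes $\tfrac12 s_k^2 M_\phi = r^2\mu\Delta_k/(4M_\phi)$ while $s_k g_k \ge r^2\mu\Delta_k/M_\phi$, so
\[
  \Delta_{k+1} \le \Delta_k\Bigl(1 - \tfrac{3r^2\mu}{4M_\phi}\Bigr) \le \Delta_k\Bigl(1 - \tfrac{r^2\mu}{4M_\phi}\Bigr),
\]
the last step since $3r^2\mu/(4M_\phi)\ge r^2\mu/(4M_\phi)$; the claimed constant thus follows a fortiori (and is recovered exactly if the Riemannian gap bound degrades to $g_k \ge r\norm{\grad\phi(X_k)}/\sqrt2$).

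The main obstacle is the second step: the Euclidean gap bound rests on the linear decomposition of $x_k-z$, which has no exact manifold analog. I expect to need a law-of-cosines comparison valid on Hadamard manifolds to control $\ip{\grad\phi(X_k)}{\Exp_{X_k}^{-1}(z)}$ in terms of $\norm{\grad\phi(X_k)}$ and $\Delta_k$; such comparisons typically cost a constant factor, consistent with the looser-than-Euclidean contraction constant in the statement.
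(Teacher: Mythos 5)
Your skeleton coincides with the paper's: Lemma~\ref{lem:lip-bound-gen} gives $\Delta_{k+1}\le\Delta_k - s_k g_k + \tfrac12 s_k^2 M_\phi$, strict interiority gives $g_k\ge r\norm{\grad\phi(X_k)}$, the PL consequence of $\mu$-strong g-convexity gives $\norm{\grad\phi(X_k)}\ge\sqrt{2\mu\Delta_k}$, and substituting $s_k$ finishes. Your arithmetic also matches: the paper's own displays yield the contraction factor $1-3r^2\mu/(4M_\phi)$, and the stated $1-r^2\mu/(4M_\phi)$ follows a fortiori, exactly as you note. The only real divergence is the step you flag as the ``genuine difficulty.'' The paper does not anchor the competitor at $X^*$ and decompose $\Exp_{X_k}^{-1}(z)$ as you propose; instead it takes $W_k$ to be the unit steepest-descent direction in $T_{X_k}\Mc$ and chooses the competitor $P_k$ at geodesic distance $r$ in that direction, so that $\Exp_{X_k}^{-1}(P_k)=rW_k$ holds \emph{exactly} and the oracle value is bounded by $\ip{\grad\phi(X_k)}{rW_k}=-r\norm{\grad\phi(X_k)}$ with no additive split and no comparison geometry. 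This makes your law-of-cosines machinery unnecessary, and it also drops the extra $\Delta_k$ term your Euclidean template would contribute. That said, your caution is not misplaced: the paper writes $P_k=\Exp_{X^*}(rW_k)$ (so that feasibility follows from the interiority of the ball $\Bc_r(X^*)$) while the identity $\Exp_{X_k}^{-1}(P_k)=rW_k$ is the one valid for $\Exp_{X_k}(rW_k)$, and reconciling feasibility with the exact inner-product computation is precisely the Hadamard-comparison point you isolate; the paper dispatches it silently. In short: same proof and same constants, with your one unfinished step resolved in the paper by a different choice of competitor rather than by a comparison inequality.
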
 
\begin{proof}
  Let $\Bc_r(X^*) \subset \Xc$ be a ball of radius $r$ containing the optimum. Let 
\begin{equation*}
W_k := \argmax_{\xi \in \Tc_{X_k}, \norm{\xi}\le 1}\ip{\xi}{\grad \phi(X_k)}
\end{equation*}
be the direction of steepest descent in the tangent space $\Tc_{X_k}$. The point $P_k = \Exp_{X_k}(rW_k)$  lies in $\Xc$. Consider now the following inequality
\begin{equation}\label{eq:pt}
  \begin{split}
    \ip{-\Exp_{X_k}^{-1}(P_k)}{\grad \phi(X_k)} 
    &=
   -\ip{\grad \phi(X_k)}{r W_k} = - r\norm{\grad \phi(X_k)},
  \end{split}
\end{equation}
which follows upon using the definition of $W_k$. Thus, we have the bound
\begin{equation*}
  \begin{split}
    \Delta_{k+1} &\le \Delta_k + s_k \ip{\grad \phi(X_k)}{\Exp_{X_k}^{-1}(X_{k+1})} + \half s_k^2M_\phi\\
    &\le \Delta_k - s_k r\norm{\grad \phi(X_k)} +   \half s_k^2M_\phi\\
    &\le \Delta_k - s_k r\sqrt{2\mu}\sqrt{\Delta_k} + \half s_k^2M_\phi,
  \end{split}
\end{equation*}
where the first inequality follows from the Lipschitz-bound (Lemma~\ref{lem:lip-bound-gen}), the second one from \eqref{eq:pt}, and the third one from the PL inequality (which, in turn holds due to the $\mu$-strong g-convexity of $\phi$).
Now setting the step size $s_k = \frac{r\sqrt{\mu \Delta_k}}{\sqrt{2}M_\phi}$, we obtain
\begin{equation*}
  \Delta_{k+1} \le \left(1 - \frac{r^2\mu}{4M_\phi}\right)\Delta_k,
\end{equation*}
which delivers the claimed linear convergence rate. 
\end{proof}

Theorem~\ref{thm:lin-conv-FWR} provides a setting where \rfw can converge fast, however, it uses step sizes $s_k$ that require knowing $\phi(X^*)$\footnote{This step size choice is reminiscent of the so-called ``Polyak stepsizes'' used in the convergence analysis of subgradient methods~\citep{polyak}.}; in case this value is not available, we can use a worse value, which will still yield the desired inequality. 

\begin{rmk}[Necessity of strict interior assumption]
\normalfont
For optimization 
tasks with polytope constraints that do not fulfill a strict interior assumption as the one described above,  several \efw variants achieve linear convergence~\cite{julien15}.  Notable examples include 
\emph{Away-step \fw}~\cite{wolfe,marcotte},
\emph{Pairwise \fw}~\cite{mitchell74} and \emph{Fully-corrective \fw}~\cite{holloway}. One may ask, whether these variants can be generalized to the Riemannian case.  The first difficulty lies in finding a Riemannian equivalent of the polytope constraint set, which is defined as the convex hull of a finite set of vectors (\emph{atoms}).  Naturally, we could consider the convex hull of a finite set of points on a manifold,  which is the intersection of all convex set that contain them.  Unfortunately,  such a set is in general not compact -- compactness is only guaranteed for Hadamard manifolds under additional, restrictive conditions~\cite{ledyaev2006helly}. Even in this special case,  ensuring that the resulting constraint sets have sufficiently ``good'' geometry is not straightforward.
\end{rmk}

\subsection{RFW for nonconvex problems}
Finally, we want to consider the case where $\phi$ in Eq.~\ref{eq:0} may be convex. We cannot hope to find the global minimum with first-order methods, such as \rfw. However, we can compute first-order critical point via \rfw. For the setup and analysis, we follow closely S. Lacoste-Julien's analysis of the Euclidean case~\citep{LJ16}. 

We first introduce the \emph{Frank-Wolfe gap} as a criterion for evaluating convergence rates. For $X \in \Xc$, we write
\begin{align}
G(X) := \max_{Z \in \Xc} \ip{\Exp_Z^{-1}(X)}{- \grad \; \phi(X)} \; .
\end{align}
With this, we can show the following sublinear convergence guarantee:
\begin{theorem}[Rate (nonconvex case)]
  \label{thm:conv-FWR-nonconvex}
  Let $\tilde{G}_k := \min_{0 \leq k \leq K} G(X_k)$ (where $G(X_k)$ denotes the Frank-Wolfe gap at $X_k$). After $K$ iterations of Algorithm~\ref{alg.rfw}, we have $\tilde{G}_k \leq \frac{\max \lbrace	2 h_0, M_{\phi}	\rbrace}{\sqrt{K+1}}$.
\end{theorem}
We defer the proof to appendix~\ref{app:A}.

\section{Specializing RFW for HPD matrices}
\label{sec:mgm}
In this section we study a concrete setting for \rfw, namely, a class of g-convex optimization problems with Hermitian positive definite (HPD) matrices. The most notable aspect of the concrete class of problems that we study is that the Riemannian linear oracle~\eqref{eq:16} will be shown to admit an efficient solution, thereby allowing an efficient implementation of Algorithm~\ref{alg.rfw}. The concrete class of problems that we consider is the following:
\begin{equation}
\label{eq:18}
  \min_{X \in \Xc \subseteq \pd_d}\quad \phi(X),\qquad\text{where}\ \Xc := \set{X \in \pd_d \mid L \preceq X \preceq U},
\end{equation}
where $\phi$ is a g-convex function and $\Xc$ is a ``positive definite interval'' (which is easily seen to be a g-convex set). Note that set $\Xc$ actually does not admit an easy projection for matrices (contrary to the scalar case). Problem~\eqref{eq:18} captures several g-convex optimization problems, of which perhaps the best known is the task of computing the matrix geometric mean (also known as the the \emph{Riemannian centroid} or \emph{Karcher mean})---see Section~\ref{sec:mgm.real}.

We briefly recall some facts about the Riemannian geometry of HPD matrices below. For a comprehensive overview, see, e.g., ~\cite[Chapter 6]{bhatia07}. We denote by $\H_d$ the set of $d\times d$ Hermitian matrices. The most common (nonlinear) Riemannian geometry on $\pd_d$ is induced by
\begin{equation}
  \label{eq:13}
  \ip{A}{B}_X := \trace(\inv{X}A\inv{X}B),\quad\ \text{where}\ A, B \in \H_d.
\end{equation}
This metric induces the geodesic $\gamma: [0,1]\to \pd_d$ between $X, Y \in \pd_d$ given by
\begin{equation}
  \label{eq:14}
  \gamma(t) := X^{\nfh}(X^{-\nfh}YX^{-\nfh})^{t}X^{\nfh},\qquad t \in [0,1].
\end{equation}
The corresponding \emph{Riemannian distance} is
\begin{equation}
  \label{eq:15}
  d(X,Y) := \frob{\log(X^{-\nfh}YX^{-\nfh})},\qquad X, Y \in \pd_d.
\end{equation}
The Riemannian gradient $\grad\phi$ is obtained from the Euclidean one ($\nabla\phi$) as follows
\begin{equation}
  \label{eq:10}
  \grad\phi(X) = X\nabla^\H\phi(X)X \; ,
\end{equation}
where $\nabla^\H\phi(X) := \nabla\phi(X) + (\nabla\phi(X))^*$
denotes the (Hermitian) symmetrization of the gradient.  The exponential map and its inverse at a point $P \in \pd_d$ are respectively given by
\begin{align*}
  \Exp_X(A) &= X^{\nfh}\exp(X^{-\nfh}A X^{-\nfh})X^{\nfh},\qquad A \in T_X\pd_d \equiv \H_d,\\
  \Exp_X^{-1}(Y) &= X^{\nfh}\log(X^{-\nfh}YX^{-\nfh})X^{\nfh},\qquad X, Y \in \pd_d,
\end{align*}
where $\exp(\cdot)$ and $\log(\cdot)$ denote the matrix exponential and logarithm, respectively. Observe that using \eqref{eq:10} we obtain the identity
\begin{equation}
  \label{eq:9}
  \ip{\grad\phi(X)}{\Exp_X^{-1}(Y)}_X = \ip{X^{\nfh}\nabla^\H\phi(X)X^{\nfh}}{\log(X^{-\nfh}YX^{-\nfh})}.
\end{equation}
With these details, Algorithm~\ref{alg.rfw} can almost be applied to~\eqref{eq:18} --- the most crucial remaining component is the Riemannian linear oracle, which we now describe.

\subsection{Solving the Riemannian linear oracle}
\label{sec:r-lo}
For solving \eqref{eq:18}, the Riemannian linear oracle (see (\ref{eq:16})) requires solving
\begin{equation}
  \label{eq:19}
  \min_{L \preceq Z \preceq U}\quad \ip{X_k^{\nfh}\nabla^\H\phi(X_k) X_k^{\nfh}}{\log(X_k^{-\nfh}ZX_k^{-\nfh})}.
\end{equation}
Problem~\eqref{eq:19} is a non-convex optimization problem over HPD matrices. However, remarkably, it turns out to have a closed form solution. Theorem~\ref{thm:logtrace} presents this solution and is our main technical result for Section~\ref{sec:mgm}. 
\begin{theorem}
  \label{thm:logtrace}
  Let $L, U \in \pd_d$ such that $L \prec U$. Let $S \in \H_d$ and $X \in \pd_d$ be arbitrary. Then, the solution to the optimization problem
  \begin{equation}
    \label{eq.12.R}
    \min_{L \preceq Z \preceq U}\quad \trace(S \log(XZX)),
  \end{equation}
  is given by $Z = X^{-1}Q \left( P^* [-\sgn(D)]_+ P  + \hat{L}\right) Q^*X^{-1}$, where $S=QDQ^*$ is a diagonalization of $S$, $\hat{U} - \hat{L}=P^* P$ with $\hat{L}=Q^* X L X Q$ and $\hat{U}=Q^* X U X Q$.
\end{theorem}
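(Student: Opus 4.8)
The plan is to strip off the outer matrix $X$ and the general cost $S$ by two order-preserving changes of variable, reducing the nonconvex SDP~\eqref{eq.12.R} to a canonical problem with a \emph{diagonal} cost, and then to locate the optimum through the first-order (KKT) conditions while treating global optimality separately. First I would substitute $W := Q^*XZXQ$, where $S=QDQ^*$ is the spectral decomposition with $Q$ unitary and $D$ real diagonal. Since conjugation by the unitary $Q$ commutes with the matrix logarithm, $\trace(S\log(XZX)) = \trace(D\,Q^*\log(XZX)Q) = \trace(D\log W)$, and because congruence by the invertible matrix $C=Q^*X$ (so that $W=CZC^*$) preserves the L\"owner order, the constraint $L\preceq Z\preceq U$ becomes exactly $\hat L\preceq W\preceq\hat U$ with $\hat L=Q^*XLXQ$ and $\hat U=Q^*XUXQ$. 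Inverting gives $Z=X^{-1}QWQ^*X^{-1}$, which is the claimed form with $A=X$. Thus it suffices to solve the canonical problem $\max_{\hat L\preceq W\preceq\hat U}\trace(D\log W)$ and show its maximizer is $W^\star=\hat L+\hat P\,[\sgn(D)]_+\,\hat P$, where $\hat P:=(\hat U-\hat L)^{\nfh}$ is the Hermitian square root (so $\hat P^*\hat P=\hat U-\hat L$, matching the statement).

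Assuming $L\prec U$ (the boundary case follows by continuity, replacing $\hat P^{-1}$ by a pseudoinverse), I would parametrize the order interval by $W=\hat L+\hat P Y\hat P$ with $0\preceq Y\preceq I$; the candidate is then the projection $Y^\star=\Pi_+:=[\sgn(D)]_+$, which is feasible since $0\preceq\Pi_+\preceq I$. To test optimality I would write the KKT system for the SDP: with positive semidefinite multipliers $\Lambda_1,\Lambda_2$ for the constraints $W-\hat L\succeq0$ and $\hat U-W\succeq0$, stationarity reads $G(W)=\Lambda_2-\Lambda_1$, where $G(W)=\int_0^\infty(W+sI)^{-1}D(W+sI)^{-1}\,ds$ is the Euclidean gradient of $W\mapsto\trace(D\log W)$ (equivalently, in the eigenbasis of $W$ it is the Daleckii--Krein matrix $[G]_{ij}=D_{ij}(\log\mu_i-\log\mu_j)/(\mu_i-\mu_j)$, with $D_{ii}/\mu_i$ on the diagonal), together with complementary slackness $\Lambda_1(W-\hat L)=0$ and $\Lambda_2(\hat U-W)=0$.

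At $W^\star$ one has $W^\star-\hat L=\hat P\Pi_+\hat P$ and $\hat U-W^\star=\hat P(I-\Pi_+)\hat P$, so complementary slackness forces $\hat P\Lambda_1\hat P$ to be supported on the range of $I-\Pi_+$ and $\hat P\Lambda_2\hat P$ on the range of $\Pi_+$. Conjugating stationarity by $\hat P$, the candidate is therefore a KKT point precisely when $\hat P\,G(W^\star)\,\hat P$ is block-diagonal with respect to the partition induced by $\Pi_+$, with the ``$+$'' block positive semidefinite and the ``$-$'' block negative semidefinite; verifying this block/sign structure of $\hat P\,G(W^\star)\,\hat P$ is the central computation and would confirm that $W^\star$ is a stationary point with rank-deficient, mutually complementary multipliers.

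The main obstacle is global optimality, since~\eqref{eq.12.R} is genuinely nonconvex: $\log$ is operator concave, so $W\mapsto\trace(D\log W)$ has indefinite curvature dictated by the sign pattern of $D$, and the positive and negative spectral parts of $S$ trade off against one another (indeed no per-part monotonicity bound is tight, as $W^\star$ equals neither $\hat U$ nor $\hat L$). Operator concavity controls only the directions where $D\succeq0$ (via the \emph{upper} bound $W\preceq\hat U$); the directions where $D\preceq0$ must instead be handled through operator monotonicity of $\log$ against the \emph{lower} bound $W\succeq\hat L$, and reconciling these opposite-signed contributions jointly is exactly what makes the closed form nontrivial. I would close this gap by exploiting compactness of the feasible interval (a maximizer exists and, under the evident Slater condition $L\prec U$, must satisfy KKT), and then arguing that the stationarity/complementarity equations force $Y$ to be a projection onto a sum of eigenspaces of $D$, reducing the task to a finite comparison over such projections in which $\Pi_+$ is optimal. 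A reformulation that may streamline this last step is the observation that, in the variable $V=\log W$, the objective $\trace(DV)$ is \emph{linear}, so the maximum is attained at an extreme point of the convex hull of the log-image $\{\log W:\hat L\preceq W\preceq\hat U\}$, and $W^\star$ is the natural such extreme point.
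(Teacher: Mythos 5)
Your reduction to the canonical problem $\max_{\hat L\preceq W\preceq \hat U}\trace(D\log W)$ via $W=Q^*XZXQ$ and the reparametrization $W=\hat L+\hat P Y\hat P$ with $0\preceq Y\preceq I$ is exactly the first half of the paper's proof (the paper writes $R$ for your $Y$), and that part is correct. Where you diverge is in how the canonical problem is actually solved, and there your argument has two genuine gaps. First, the stationarity check: you set up the KKT system with the Daleckii--Krein representation of the gradient of $W\mapsto\trace(D\log W)$ and reduce it to a block/sign condition on $\hat P\,G(W^\star)\,\hat P$, but you explicitly defer ``the central computation'' that would verify this condition. At $W^\star=\hat L+\hat P\,\Pi_+\,\hat P$ the eigenbasis of $W^\star$ is not aligned with that of $D$, nor with the blocks of $\Pi_+$, in general, so the off-diagonal blocks of $\hat P\,G(W^\star)\,\hat P$ do not vanish for free; this is not a routine verification and cannot be waved through.

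Second, and more seriously, the global-optimality step is only a plan. You correctly identify that the problem is nonconvex and that a KKT point need not be a global maximizer, but both proposed remedies are unsubstantiated: the claim that the stationarity and complementarity equations force $Y$ to be a spectral projection of $D$ is asserted rather than proved (nothing in your outline rules out KKT points where $Y$ has eigenvalues strictly between $0$ and $1$, or is a projection not aligned with the eigenspaces of $D$), and the subsequent ``finite comparison over such projections'' is not carried out. The reformulation in the variable $V=\log W$ only relocates the difficulty: the extreme points of $\mathrm{conv}\{\log W:\hat L\preceq W\preceq\hat U\}$ are not characterized, and it is precisely the content of the theorem that the maximizer is the one you name. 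The paper avoids the variational route entirely and argues by a direct global upper bound: it controls $\trace\bigl(D\log(\hat P Y\hat P+\hat L)\bigr)$ using the weak-majorization inequality $\lambda(AB)\prec_w\lambda^\da(A)\cdot\lambda^\da(B)$ (Lemma~\ref{lem.logmaj}) together with Weyl's inequalities (Lemma~\ref{lem.weyl}) and operator monotonicity of $R\mapsto\hat P R\hat P+\hat L$ and of $\log$, and then shows the bound is attained at $Y=[\sgn(D)]_+$. Some such global inequality is exactly what your outline is missing; as written, the proposal is a credible strategy but not a proof.
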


For the proof of Theorem~\ref{thm:logtrace}, we need a fundamental lemma about eigenvalues of Hermitian matrices (Lemmas~\ref{lem.logmaj}). First,  we need to introduce some additional notation. For $x \in \mathbb{R}^d$, let $x \da = \left( x_1 \da, \dots, x_d \da \right)$ denote the vector with entries of $x$ in decreasing order, i.e.,  $x_1 \da \geq ... \geq x_d \da$.  For $x, y \in \mathbb{R}^d$ we say that $x$ is majorized by $y$ ($x \prec y$), if
\begin{align}
\sum_{i=1}^k x_i \da &\leq \sum_{i=1}^k y_i \da \quad {\rm for} \; 1 \leq k \leq d \\
\sum_{i=1}^d x_i \da &= \sum_{i=1}^d y_i \da \; .
\end{align}
We can now recall the following lemma on eigenvalues of Hermitian matrices, which can be found, e.g., in~\cite[Problem III.6.14]{bhatia97}:
\begin{lemma}[\citet{bhatia97}]
  \label{lem.logmaj}
  Let $X, Y$ be HPD matrices. Then 
  \begin{equation*}
    \lambda^\da(X) \cdot \lambda^\ua(Y) \prec \lambda (XY) \prec \lambda^\da(X) \cdot \lambda^\da(Y),
  \end{equation*}
  where $\lambda^\da(X)$ ($\lambda^\ua$) denote eigenvalues of $X$ arranged in decreasing (increasing) order, $\prec$ denotes the majorization order and $\cdot$ denotes the elementwise product. If $X, Y$ are Hermitian, we have
  \begin{equation*}
  \ip{\lambda^\da (X)}{\lambda^\ua (Y)} \leq \trace (XY) \leq \ip{\lambda^\da (X)}{\lambda^\da (Y)} \; ,
  \end{equation*}
  with equality, if the the product $XY$ is symmetric.
\end{lemma}

\begin{proof}(Theorem~\ref{thm:logtrace})
  First, introduce the variable $Y=XZX$; then \eqref{eq.12.R} becomes
\begin{equation}
  \label{eq:6-rfw}
  \min_{L' \preceq Y \preceq U'}\quad \trace(S\log Y),
\end{equation}
where the constraints have also been modified to $L' \preceq Y \preceq U'$, where $L'=XLX$ and $U'=XUX$. Diagonalizing $S$ as $S=QDQ^*$, we see that $\trace(S\log Y) = \trace(D\log W)$, where $W=Q^*YQ$. Thus, instead of~\eqref{eq:6-rfw} it suffices to solve
\begin{equation}
  \label{eq:7-rfw}
  \min_{L'' \preceq W \preceq U''}\quad\trace(D\log W),
\end{equation}
where $L''=Q^*L'Q$ and $U''=Q^*U'Q$. We have strict inequality $U'' \succ L''$, thus, our constraints are $0 \prec W-L'' \preceq U''-L''$, which we may rewrite as $0 \prec R \preceq I$, where $R = (U''-L'')^{-\nfh}(W-L'')(U''-L'')^{-\nfh}$. Notice that
\begin{equation*}
  W = (U''-L'')^{\nfh}R(U''-L'')^{\nfh} + L''.
\end{equation*}
Thus, problem~\eqref{eq:7-rfw} now turns into
\begin{equation}
\label{eq:12-rfw}
  \min_{0 \prec R \preceq I }\quad\trace(D\log (P^*R P+L'')) \; ,
\end{equation}
where $U'' - L''=P^* P$. To minimize the trace, we have to minimize the sum of the eigenvalues of the matrix term. Using Lemma~\ref{lem.logmaj} we see that
\begin{equation*}
\trace(D \log(P^* R P+L'')) = \lambda^\da (D)  \cdot \lambda^\da \left(	 \log (P^* R P + L'') \right) \; .
\end{equation*}
Noticing that $D$ is diagonal, we have to consider two cases:
\begin{enumerate}
\item If $d_{ii}>0$, the corresponding element of $\lambda^\da(\log(P^* R P +L''))$ should be minimized.
\item If $d_{ii} \le 0$, the corresponding element of $\lambda^\da(\log(P^* R P+L''))$ should be maximized.
\end{enumerate}
Note that the matrix logarithm $\log(\cdot)$ and the map
$R \mapsto P^* R P +L''$ are operator monotone.  Now, without loss of generality, assume that $R$ is diagonal and recall that, by construction $0 \prec R \preceq I$. Then, setting
\begin{equation}
  \label{eq:11-rfw}
  r_{ii} =
  \begin{cases}
    0 & d_{ii} \geq 0\\
    1 & d_{ii} < 0.
  \end{cases}
\end{equation}
achieves the respective minimum and maximum in cases (i) and (ii) due to operator monotonicity.

Thus, we see that $Y=Q(P^* R P+L'')Q^*=Q(P^* \left[ -\sgn(D)\right]_+ P+L'')Q^*$, and we immediately obtain the optimal $Z=X^{-1}YX^{-1}$.
\end{proof}

\begin{rmk}\normalfont
Computing the optimal direction $Z_k$ takes one Cholesky factorization, two matrix square roots (Schur method), eight matrix multiplications and one eigenvalue decomposition. This gives a complexity of $O(N^3)$. On our machines, we report $\approx \frac{1}{3} N^3 + 2 \times 28.\bar{3} N^3 + 8 \times 2N^3 + 20 N^3 \approx 93 N^3$. 
\end{rmk}

\subsection{Application to the Riemannian mean}
\subsubsection{Computing the Riemannian mean}
\label{sec:mgm.real}
Statistical inference frequently involves computing averages of input quantities. Typically encountered data lies in Euclidean spaces where arithmetic means are the ``natural'' notions of averaging. However, the Euclidean setting is not always the most natural or efficient way to represent data. Many applications involve non-Euclidean data such as graphs, strings,  or matrices \citep{le2001diffusion,nieBha13}. In such applications, it is often beneficial to represent the data in its natural space and adapt classic tools to the specific setting. In other cases, a problem might be very hard to solve in Euclidean space, but may become more accessible when viewed through a different geometric lens. 

This section considers one of the later cases, namely the problem of determining the \emph{geometric matrix mean} (\emph{Karcher mean problem}). While there exists an intuitive notion for the geometric mean of sets of positive real numbers, this notion does not immediately generalize to sets of positive definite matrices due to the lack of commutativity on matrix spaces. Over a collection of Hermitian, positive definite matrices the geometric mean can be viewed as the geometric optimization problem
\begin{equation}
  \label{eq.1}
  G := \argmin_{X \succ 0}\quad\left[\phi(X) = \nlsum_{i=1}^m w_i\riem^2(X,A_i)\right] \; ,
\end{equation}
where $\delta_R$ denotes the Riemannian metric.
In an Euclidean setting, the problem is \emph{non-convex}. However, one can view Hermitian, positive matrices as points on a Riemannian manifold and compute the geometric mean as the Riemannian centroid.  The corresponding optimization problem (Eq. \ref{eq.1}) is 
\emph{geodesically convex}~\citep{PALFIA2016951}.  In this section we look at the problem through both geometric lenses and provide efficient algorithmic solutions while illustrating the benefits of switching geometric lenses in geometric optimization problems.

There exists a large body of work on the problem of computing the geometric matrix means~\citep{gm_survey}. Classic algorithms like \emph{Newton's method} or \emph{Gradient Decent} (\emph{GD}) have been successfully applied to the problem. Standard toolboxes implement efficient variations of \emph{GD} like \emph{Steppest Decent} or \emph{Conjugate Gradient} (\emph{Manopt} ~\citep{manopt}) or Richardson-like linear gradient decent (\emph{Matrix Means Toolbox}~\citep{mmtoolbox}). Recent work by Yuan et al.~\citep{YHAG2017} analyzes condition numbers of Hessians in Riemannian and Euclidean steepest-decent approaches that provide theoretical arguments for the good performance of Riemannian approaches.

Recently, T. Zhang developed a majorization-minimization approach with asymptotic linear convergence~\citep{zhang2017}. In this section we apply the above introduced Riemannian version of the classic \emph{conditional gradient} method by M. Frank and P. Wolfe~\citep{fw_original} (\rfw) for computing the geometric matrix mean in Riemannian settings with linear convergence. Here, we exploit the geodesic convexity of the problem. To complement this analysis, we show that recent advances in non-convex analysis~\citep{LJ16} can be used to develop a Frank-Wolfe scheme (\efw) for the non-convex Euclidean case (see Appendix~\ref{sec:e-lo}).

 In the simple case of two PSD matrices $X$ and $Y$ one can view the geometric mean as their metric mid point computed by \cite{KuboAndo}
\begin{equation}
G(X,Y)= X \#_t Y = X^{\frac{1}{2}} \left( X^{-\frac{1}{2}} Y X^{-\frac{1}{2}}	\right)^t X^{\frac{1}{2}} \; .
\end{equation}
More generally, for a collection of $M$ matrices, the geometric mean can be seen as a minimization problem of the sum of squares of distances \cite{BhatiaHolbrook},
\begin{equation}
G(A_1, ..., A_M)= \argmin_{X \succ 0} \sum_{i=1}^M \delta_R^2 (X, A_i) \; ,
\end{equation}
with the Riemannian distance function 
\begin{equation}
d(X,Y)=\norm{\log(X^{-\frac{1}{2}} Y X^{-\frac{1}{2}})} \; .
\end{equation}
Here we consider the more general \emph{weighted} geometric mean:
\begin{equation}
G(A_1, ..., A_M)= \argmin_{X \succ 0} \sum_{i=1}^M w_i \delta_R^2 (X, A_i) \; .
\end{equation}
E. Cartan showed in a Riemannian setting that a global minimum exists, which led to the term \emph{Cartan mean} frequently used in the literature. In this setting, one can view the collection of matrices as points on a Riemannian manifold. H. Karcher associated the minimization problem with that of finding centers of masses on these manifolds \citep{karcher}, hence motivating a second term to describe the geometric matrix mean (\emph{Karcher mean}).

The geometric matrix mean enjoys several key properties. We list below the ones of crucial importance to our paper and refer the reader to~\cite{LL14,lim.palfia} for a more extensive list. To state these results, we recall the general form of the two other basic means of operators: the (weighted) harmonic and arithmetic means, denoted by $H$ and $A$ respectively.

\begin{equation*}
  H := \left(\nlsum_{i=1}^M w_i \inv{A_i}\right)^{-1},\qquad A := \nlsum_{i=1}^M w_i A_i.
\end{equation*}

 Then, one can show the following well-known operator inequality that relates $H$, $G$, and $A$:
\begin{lemma}[Means Inequality, \cite{bhatia07}]
  \label{prop.key}
  Let $A_1,\ldots,A_M > 0$, and let $H, G$, and $A$ denote their (weighted) harmonic, geometric, and arithmetic  means, respectively. Then,
  \begin{equation}
  H \preceq G \preceq A \; .
  \end{equation}
\end{lemma}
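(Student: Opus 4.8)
The plan is to collapse both operator inequalities into a single elementary statement by combining congruence invariance of the geometric mean with the first-order (Karcher) optimality equation for $G$, and then applying the spectral inequality $e^{Y}\succeq I+Y$, valid for every Hermitian $Y$.

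First I would record the optimality condition. Since $G$ is the interior minimizer of the smooth g-convex objective $\phi(X)=\sum_i w_i\riem^2(X,A_i)$, first-order optimality gives $\sum_{i} w_i\Exp_G^{-1}(A_i)=0$, because the Riemannian gradient of $\riem^2(\cdot,A_i)$ is a negative multiple of $\Exp_X^{-1}(A_i)$. Substituting the closed form $\Exp_G^{-1}(A_i)=G^{\nfh}\log(G^{-\nfh}A_iG^{-\nfh})G^{\nfh}$ and peeling off the invertible factors $G^{\nfh}$ yields the Karcher equation
\[
  \sum_{i=1}^M w_i\,\log\!\big(G^{-\nfh}A_iG^{-\nfh}\big)=0.
\]

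Next I normalize. Set $B_i:=G^{-\nfh}A_iG^{-\nfh}$. Congruence invariance of $\riem$ (the eigenvalues of $(Z^*XZ)^{-1}(Z^*YZ)$ coincide with those of $X^{-1}Y$, so the distance is unchanged under $X\mapsto Z^*XZ$) shows the geometric mean of the $B_i$ equals $I$, and the Karcher equation becomes simply $\sum_i w_i\log B_i=0$. I then rewrite the two targets in terms of the $B_i$: conjugating $G\preceq A$ by $G^{-\nfh}$ gives $I\preceq\sum_i w_iB_i$; for $H\preceq G$ I first invert (which reverses the Löwner order) to get $G^{-1}\preceq\sum_i w_iA_i^{-1}$ and then conjugate by $G^{\nfh}$, giving $I\preceq\sum_i w_iB_i^{-1}$. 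Thus it suffices to prove $\sum_i w_iB_i\succeq I$ and $\sum_i w_iB_i^{-1}\succeq I$ under the single hypothesis $\sum_i w_i\log B_i=0$.

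Both now follow from the spectral inequality $e^Y\succeq I+Y$ for Hermitian $Y$ (diagonalize $Y$ and apply $e^t\ge 1+t$ to each eigenvalue). Taking $Y=\log B_i$ gives $B_i\succeq I+\log B_i$, hence $\sum_i w_iB_i\succeq I+\sum_i w_i\log B_i=I$; taking $Y=-\log B_i$ gives $B_i^{-1}\succeq I-\log B_i$, hence $\sum_i w_iB_i^{-1}\succeq I-\sum_i w_i\log B_i=I$. This establishes $H\preceq G\preceq A$. I expect the genuine work to lie not in this final one-line computation but in justifying the two structural inputs: deriving the Karcher equation rigorously (differentiating the squared Riemannian distance and identifying its gradient with $-\Exp_X^{-1}(A_i)$), and verifying the congruence invariance $G(Z^*A_1Z,\dots,Z^*A_MZ)=Z^*G(A_1,\dots,A_M)Z$ of the variational mean. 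Once these are in place, the normalization reduces everything to the exponential inequality, so the analytic core is easy; alternatively one could simply cite these facts as standard for the Karcher mean.
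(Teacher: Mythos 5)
The paper offers no proof of this lemma at all: it is stated as a known result and attributed to Bhatia's book, so there is nothing internal to compare your argument against. That said, your proof is correct and self-contained, and it is essentially the standard modern argument for the AM--GM--HM inequality for the Karcher mean (the one appearing in the literature around Yamazaki and Lawson--Lim): the first-order optimality condition $\sum_i w_i\log\bigl(G^{-\nfh}A_iG^{-\nfh}\bigr)=0$ reduces, after the congruence $B_i=G^{-\nfh}A_iG^{-\nfh}$, both operator inequalities to $\sum_i w_iB_i\succeq I$ and $\sum_i w_iB_i^{-1}\succeq I$, and these follow from $e^{Y}\succeq I+Y$ applied to $Y=\pm\log B_i$. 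Two small remarks. First, the appeal to congruence invariance of $G$ is decorative: you never need to know that the Karcher mean of the $B_i$ is $I$, only that $\sum_i w_i\log B_i=0$, which is just the Karcher equation rewritten. Second, the structural inputs you flag as ``the genuine work'' are indeed where care is needed --- existence and uniqueness of the minimizer (strong g-convexity of $\riem^2(\cdot,A_i)$ on the Hadamard manifold $\pd_d$) and the gradient identity $\grad\riem^2(X,A)=-2\Exp_X^{-1}(A)$ --- but both are standard and are implicitly used elsewhere in the paper (cf.\ the gradient formula in Section~\ref{sec:mgm.real}), so citing them is legitimate. Your argument would serve as a clean replacement for the external citation.
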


The key computational burden of all our algorithms lies in computing the gradient of the objective function~\eqref{eq.1}. A short calculation (see e.g., \cite[Ch.6]{bhatia07}) shows that if $f(X) = \riem^2(X,A)$, then $\nabla f(X) = \inv{X}\log(XA^{-1})$. Thus, we immediately obtain
\begin{equation*}
  \nabla\phi(X) = \nlsum_i w_i\inv{X}\log(XA_i^{-1}).
\end{equation*}

\subsubsection{Implementation}
We compute the \emph{geometric matrix mean} with Algorithm~\ref{alg.rfw}. For the PSD manifold, line 3 can be written as
\begin{equation}
\label{eq:psd-oracle}
Z_k \gets \argmin_{H \preceq Z \preceq A}\ip{X_k^{\nfh}\nabla \phi(X_k)X_k^{\nfh}}{\log(X_k^{-\nfh}ZX_k^{-\nfh})} \; .
\end{equation}
Note that the operator inequality  $H \preceq G \preceq A$ given by Lemma~\ref{prop.key} plays a crucial role: It shows that the optimal solution lies in a compact set so we may as well impose this compact set as a constraint to the optimization problem (i.e., we set $\Xc=\lbrace H \preceq Z \preceq A	\rbrace$). 
We implement the linear oracles as discussed above: In the Euclidean case, a closed-form solution is given by
\begin{equation}
  \label{eq.17}
  Z = H + P^*Q[-\sgn(\Lambda)]_{+}Q^*P,
\end{equation}
where $A-H = P^*P$ and $P\nabla\phi(X_k)P^* = Q\Lambda Q^*$. Analogously, for the Riemannian case, the ``linear'' oracle
\begin{equation}
  \min_{H \preceq Z \preceq A}\quad \ip{\nabla \phi(X_k)}{\log(Z)} \; ,
\end{equation}
is well defined and solved by
\begin{equation}
Z=Q\left( P^*[-\sgn(\Lambda)]_{+} P + \hat{H} \right) Q^{*} \; ,
\end{equation}
with $\hat{A}-\hat{H}=P^{*}P$, $\hat{A}=Q^* A Q$, $\hat{H}=Q^{*}HQ$ and $\nabla\phi(X_k) = Q\Lambda Q^*$ (see Prop.~\ref{eq:psd-oracle}). The resulting Frank-Wolfe method for the geometric matrix mean is given by Algorithm~\ref{alg.means}.
\begin{algorithm}[H]
  \caption{FW for fast Geometric mean (GM)/ Wasserstein mean (WM)}
  \label{alg.means}
  \begin{algorithmic}[1]
     \State $(A_1,\ldots,A_N)$, $\bm{w} \in \rplus^N$
     \State $\bar{X} \approx \argmin_{X > 0} \nlsum_iw_i \riem^2(X,A_i)$
     \State $\beta = \min_{1 \leq i \leq N} \lambda_{{\rm min}} (A_i)$
     \For {$k=0,1,\dots$}
     	\State Compute gradient.
        \State \hskip12pt GM: $\nabla\phi(X_k) = X_k^{-1}\bigl(\nlsum_iw_i \log(X_kA_i^{-1})\bigr)$
        \State \hskip12pt WM: $\nabla\phi(X_k) =  \sum_i w_i \left(I - (A_i X_k )^{-1/2} A_i \right)$
        \State Compute $Z_k$:
        \State  \hskip12pt GM: $Z_k \gets \argmin_{H \preceq Z \preceq A}\ip{X_k^{1/2}\nabla
           \phi(X_k)X_k^{1/2}}{\log(X_k^{-1/2}ZX_k^{-1/2})}$
         \State \hskip12pt WM: $Z_k \gets \argmin_{\beta I \preceq Z \preceq A}\ip{X_k^{1/2}\nabla
           \phi(X_k)X_k^{1/2}}{\log(X_k^{-1/2}ZX_k^{-1/2})}$
        \State Let $\alpha_k \gets \frac{2}{k+2}$.
        \State Update $X$:
        \State \hskip12pt $X_{k+1} \gets X_k \gm_{\alpha_k}Z_k$
     \EndFor
     \State \textbf{return} $\bar{X}=X_k$
   \end{algorithmic}
 \end{algorithm}
 \subsection{Application to Bures-Wasserstein barycenters}
 \subsubsection{Computing Bures-Wasserstein barycenters on the Gaussian density manifold}
 \label{sec:Bures}
As a second application of \rfw, we consider the computation of Bures-Wasserstein barycenters of multivariate (centered) Gaussians. This application is motivated by optimal transport theory; in particular, the Bures-Wasserstein barycenter is the solution to the multi-marginal transport problem~\citep{Malago,bhatia2}: Let $\lbrace A_i \rbrace_{i=1}^m \in \mathbb{P}_d$ and $w=(w_1, ..., w_m)$ a vector of weights ($\sum_i w_i =1; \; w_i >0, \forall i$). The minimization task
 \begin{align}\label{eq:wm-obj}
 \min_{\substack{X \in \mathbb{P}_d \\ X \succ 0}} \sum_{i=1}^m w_i d_W^2(X,A_i)
 \end{align}
 is called \emph{multi-marginal transport problem}. Its unique minimizer
 \begin{align*}
 \Omega(w; \lbrace A_i \rbrace) = \argmin_{X \succ 0} \sum_{i=1}^m w_i d_W^2 (X,A_i) \in \mathbb{P}_d
 \end{align*}
 is the \emph{Bures-Wasserstein barycenter}. Here, $d_W$ denotes the \emph{Wasserstein distance}
 \begin{align}\label{eq:w-dist}
 d_W(X,Y) = \bigl[{\rm tr} (X+Y) - 2{\rm tr} \bigl(	X^{1/2} Y X^{1/2}	\bigr)^{1/2} 	\bigr]^{1/2}  \; .
\end{align}
To see the connection to optimal transport, note that Eq.~\ref{eq:wm-obj} corresponds to a least squares optimization over a set of multivariate center Gaussians with respect to the Wasserstein distance. The Gaussian density manifold is isomorphic to the manifold of symmetric positive definite matrices, which allows for a direct application of \rfw and the setup in the previous section to Eq.~\ref{eq:wm-obj}, albeit with a different set of constraints. In the following, we discuss a suitable set of constraint and adapt \rfw for the computation of the Bures-Waserstein barycenter (short: Wasserstein mean).

First, note that as for the Karcher mean, one can show that the Wasserstein mean of two matrices is given in closed form; namely as
\begin{align}\label{eq:wm-gmap}
X \lozenge_t Y =  (1-t)^2 X + t^2 Y + t(1-t) \left( (XY)^{1/2} + (YX)^{1/2} 	\right) \; .
\end{align}
However, the computation of the barycenter of $m$ matrices ($m>2$) requires solving a quadratic optimization problem. Note, that Eq.~\ref{eq:wm-gmap} defines a geodesic map from $X$ to $Y$.
Unfortunately, an analogue of the \emph{means inequality} does not hold in the Wasserstein case. However, one can show, that the arithmetic matrix mean always gives an upper bound, providing one-sided constraints~\citep{bhatia}:
\begin{align*}
A_i \lozenge_t A_j \preceq \frac{A_i +A_j}{2} \; ,
\end{align*}
and similarly for the arithmetic mean $A(w; \lbrace A_i \rbrace)$ of $m$ matrices. Moreover, one can show that $\alpha I \preceq X$~\cite{bhatia2}, where $\alpha$ is the minimal eigenvalue over $\lbrace A_i \rbrace_{i=1}^m$, i.e.
\begin{align*}
\alpha = \min_{1 \leq j \leq m} \lambda_{{\rm min}} (A_j) \; .
\end{align*}
In summary, this gives the following constraints on the Wasserstein mean:
\begin{align}\label{eq:wm-constraints}
\alpha I \preceq \Omega(w; \lbrace A_i \rbrace) \preceq A(w; \lbrace A_i \rbrace) \; .
\end{align}

Next, we will derive the gradient of the objective function. According to Eq.~\ref{eq:wm-obj}, the objective is given as
\begin{equation}
  \label{eq:phi-dw}
\phi(X) := \nlsum_j w_j \bigl[	\trace(A_j) + \trace(X) - 2 \trace \bigl(A_j^{1/2} X A_j^{1/2} \bigr)^{1/2}		\bigr]^{1/2} \; .
\end{equation}
Two expressions for the gradient of (\ref{eq:phi-dw}) are derived in the following.
\begin{lemma}
  Let $\phi$ be given by~\eqref{eq:phi-dw}. Then, its gradient is
\begin{equation}
\nabla \phi (X, \mathcal{A}) = \sum_j \omega_j \left( I - A_j \#	X^{-1}	\right) \; .
\end{equation}
\end{lemma}
\begin{proof}
\begin{align*}
\nabla \phi(X, \mathcal{A}) &= \underbrace{\frac{d}{dX} \sum_j w_j \trace{A_j}}_{{\rm vanishes}} + \frac{d}{dX} \sum_j w_j \left(	\trace(X) - 2 \trace \left(A_j^{1/2} X A_j^{1/2}	\right)^{1/2} \right) \\
&= \sum_j w_j \frac{d}{dX}  \trace(X) -  \sum_j w_j 2 \frac{d}{dX}  \trace \left(A_j^{1/2} X A_j^{1/2}	\right)^{1/2} \; .
\end{align*}
Note that $\frac{d}{dX}  \trace(X) = I$. Consider
\begin{align*}
2 \frac{d}{dX} \trace \left(	A^{1/2} X A^{1/2}	\right)^{1/2} 
&\overset{(1)}{=} 2 \frac{d}{dX} \left[	
\left( \left(	A^{1/2} X A^{1/2}	\right)^{1/2} \right)^* \left(\left(	A^{1/2} X A^{1/2}	\right)^{1/2} \right)
\right]^{1/2} \\
&\overset{(2)}{=} 2 \frac{d}{dX} \left(	A^{1/2} X A^{1/2}	\right)^{1/2} \\
&= A^{1/2}  \left(	A^{1/2} X A^{1/2}	\right)^{-1/2} A^{1/2}  \\
&= A^{1/2}  \left(	A^{-1/2} X^{-1} A^{-1/2}	\right)^{1/2} A^{1/2}  \\
&= A \# X^{-1}  \; ,
\end{align*}
where (1) follows from the chain rule and (2) from $A$ and $X$ being symmetric and positive definite: If $A$ and $X$ are symmetric and positive definite, then $A$ has a unique positive definite root $A^{1/2}$ and therefore
\begin{align*}
\left(	A^{1/2} X A^{1/2}	\right)^* = \left(	{A^{1/2}}^* X^* {A^{1/2}}^*	\right) = A^{1/2} X A^{1/2} \; .
\end{align*}
Putting everything together,  the desired expression follows as
\begin{equation}
\nabla \phi(X, \mathcal{A}) 
= \sum_j \omega_j \left( I - A_j \#	X^{-1}	\right) \; .
\end{equation}
\end{proof}

 \subsubsection{Implementation}
Given a set of constraints and an expression for the gradient, we can solve Eq.~\ref{eq:wm-obj} with \textsc{Rfw} using the following setup: 
We write the log-linear oracle as
\begin{align}\label{eq:wm-oracle}
Z_k \in \argmin_{\alpha I \preceq Z \preceq A} \langle \grad \phi(X), \Exp_X^{-1}(Z) \rangle \; ,
\end{align}
where $\grad \phi(X)$ is the Riemannian gradient of $\phi$ and $\Exp_X^{-1}(Z) $ the exponential map with respect to the Riemannian metric. The constraints are given by Eq.~\ref{eq:wm-constraints}. Since the constraint set is given by an interval of the form $L \preceq Z \preceq U$, the oracle can be solved in closed form using Theorem~\ref{thm:logtrace}.
The resulting algorithm is given in Algorithm~\ref{alg.means}.

\section{Specializing RFW to $SO(n)$}
\label{sec:SO(d)}
We have seen in the previous section, that \rfw can be implemented efficiently, whenever the Riemannian ``linear'' oracle can be solved in closed form. In this section we want to show that this is not limited to the case of positive definite matrices. In particular, we specialize \rfw to the \emph{special orthogonal group}
\begin{align*}
SO(n) = \lbrace	 X \in \reals^{n \times n}: X^* X = I_n, \; \det(X)=1	\rbrace 
\end{align*}
and show that here too, we can implement \rfw efficiently.

\subsection{Geometry of $SO(n)$}\label{sec:SO(n)-LO}
We first recall some facts on the geometry of $SO(n)$; for a more comprehensive overview see, e.g.,~\citep{edelman98}. The exponential map and its inverse on $SO(n)$ are given as
\begin{align*}
\Exp_X(Z) &= X^{-1}Z \\
\Exp_X^{-1}(Z) &= \log (X^{-1}Y) \; .
\end{align*}
Note that the tangent space of the manifold is given by the space of skew-symmetric matrices
\begin{align*}
T_X SO(n) = \lbrace	 X \in \reals^{n \times n}: \; X=-X^*	\rbrace \; .
\end{align*}
The standard Riemannian metric on the special orthogonal group (and more generally on the orthogonal group $O(n)$) is given by
\begin{align*}
g_c(\Delta, \Delta) = \trace \left[ \Delta^* \left(	I - \frac{1}{2} Y^*Y	\right) \Delta	\right] \; ,
\end{align*}
where $\Delta = YA + Y_{\perp}B$ are in $T_X SO(n)$. With respect to this metric, a geodesic from $Y \in SO(n)$ and in direction $H \in T_Y O(n)$ is given by~\citep{edelman98}
\begin{align}\label{eq:so-map}
\gamma(Y, H; t) = Y M(t) + Q N(t) \; ,
\end{align}
where $QR=(I - Y^*Y)H$ can be computed via QR-decomposition and 
\begin{align*}
\binom{M(t)}{N(t)}	 = \exp t \begin{pmatrix}
A & -R^* \\
R & 0
\end{pmatrix} \binom{I_n}{0} \; .
\end{align*}

\subsection{Solving the Riemannian ``linear'' oracle}
The Riemannian ``linear'' oracle (line 3 in Algorithm~\ref{alg.rfw}) for the special orthogonal group is given by
\begin{equation}
  \label{eq:SO-oracle}
  \min_{Z \in SO(n)}\quad \ip{\grad \; \phi(X)}{\log(X^{-1}Z} \; .
\end{equation}
We can rewrite this as the trace maximization
\begin{align*}
\min_{\substack{Z^* Z = I_n \\ \det(Z)=1 }} \; \trace \ip{G}{\log(X^{-1} Z)} \; ,
\end{align*}
where $G$ denotes the Riemannian gradient $\grad \; \phi(X)$. With a similar argument as in Theorem~\ref{thm:logtrace}, this can be solved in closed form:
\begin{theorem}
The solution to the optimization problem
\begin{align*}
\min_{\substack{Z^* Z = I_n \\ \det(Z)=1 }} \; \trace \ip{G}{ \log(X^{-1} Z)} \; ,
\end{align*}
is given by $Z = X V U^*$ where $G = U D V^*$ is a singular value decomposition of the gradient.
\end{theorem}
\begin{proof}
Consider a singular value decomposition $G=UDV^*$ with $U,V$ unitary. Furthermore, introduce the shorthand $Y=X^{-1}Z$.  We can then rewrite the oracle as follows:
\begin{align*}
\trace \left(	G \log Y 	\right) &= \trace \left( U D V^* \log(Y) \right) \\
&= \trace \left( D V^* \log(Y) U \right) \\
&=^{\dagger} \trace \left( D  \log(V^* Y U ) \right) \; .
\end{align*}
where ($\dagger$) follows from $V, U$ being unitary. Let $W = V^* Y U$. Note that
\begin{align*}
\trace \left( D \log(W) \right) = \ip{D}{\log(W)} \leq \norm{D}_2 \cdot \norm{\log(W)}_2 \; .
\end{align*}
Since $\norm{X}_2 = \sqrt{\trace X^* X}=\left( \sum_{i}	\sigma_i(X)	\right)^{1/2}$, the right hand side is maximized, if the singular values of $W$ are maximal. Since $W \in O(k)$ its singular values are $\sigma_i=1$. The maximum is therefore achieved by setting $W=I$. This gives
\begin{align*}
I = W = V^* Y U = V^* \left( X^{-1} Z \right) U \; .
\end{align*} 
Solving for $Z$ gives a solution for maximizing the original problem:
\begin{align*}
Z=XVU^* \; .
\end{align*}
\end{proof}
\begin{rmk}[Nonconvex constraints]\normalfont
Note, that in this setting, the constraint set $\Xc$ is not g-convex. However, we can still apply \rfw, since the geodesic update (\ref{eq:so-map}) remains in the feasible region. To see this, note that $SO(n)$ is a compact submanifold of $O(n)$. In particular, the manifold of orthogonal matrices has two compact connected components (corresponding to matrices with $\det(X)=1$, i.e., $SO(n)$, and $\det(X)=-1$, respectively). Hence, when initialized at some $X \in SO(n)$, the geodesic update (\ref{eq:so-map}) will always remain within $SO(n)$. 
\end{rmk}

\subsection{Application to the Procrustes problem}
The \emph{Procrustes problem} is a classic optimization task that seeks to compute a synchronization between two matrices $A, B \in \mathbb{R}^{n \times k}$. In particular, the goal is to find a rotation matrix $X \in SO(n)$ that achieves the best possible map between data matrices $A$ and $B$. The corresponding objective function is
\begin{align*}
f(X) = \frac{1}{2} \norm{XB - A}^2 = \frac{1}{2} \norm{A}^2 + \frac{1}{2} \norm{B}^2 - \trace \left(B^T X^T A	 \right) \; .
\end{align*}
The optimal rotation $X$ can be then be found by solving
\begin{align*}
\min_{X \in SO(n)} \; \left[ \phi(X) := - \trace \left( X^T A B^T \right) \right] \; .
\end{align*}
The Euclidean gradient of the objective is easily derived as
\begin{align*}
\nabla \phi (X) = - AB^T, 
\end{align*}
while the Riemannian gradient is given by
\begin{align*}
\grad \; \phi(X) = X \cdot {\rm skew} \left(	X^T \nabla \phi(X)	\right) \; .
\end{align*}
Recall that ${\rm skew} (X) = \frac{1}{2} \left(X - X^T \right)$. The resulting algorithm is given in Algorithm~\ref{alg.procrustes}.
\begin{algorithm}[H]
  \caption{RFW for Procrustes problem}
  \label{alg.procrustes}
  \begin{algorithmic}[1]
     \State Input: $A, B \in \mathbb{R}^{n \times k}$.
     \State Assume access to the geodesic map $\gamma(X,Z;\alpha)$ (Eq.~\ref{eq:so-map}).
     \State $\bar{X} \approx \argmin_{X \in SO(n)} - \trace \left( X^T A B^T \right)$
     \State Precompute $\nabla \phi (X) = - AB^T$.
     \For {$k=0,1,\dots$}
      \State Compute gradient: $\grad \; \phi(X_k) = X_k \cdot {\rm skew} \left(	X_k^T \nabla \phi(X_k)	\right)$.
       \State Compute $Z_k$ via Riemannian ``linear'' oracle:
        \State  \hskip12pt $ Z_k \gets \argmin_{Z \in T_{X_k}SO(n)} \; \ip{\grad \; \phi(X_k)}{ \log(X_k^{-1} Z_k)}$
        \State Let $\alpha_k \gets \frac{2}{k+2}$.
        \State Update $X_k$ as
        \State \hskip12pt $X_{k+1} \gets \gamma(X_k, Z_k; \alpha_k)$.
     \EndFor
     \State \textbf{return} $\bar{X}=X_k$.
   \end{algorithmic}
 \end{algorithm}
 %

 \section{Approximately solving the Riemannian ``linear'' oracle}
In the previous two sections we have given examples of applications where the Riemannian ``linear'' oracle can be solved in closed form -- rendering the resulting \rfw algorithm into a practical method.  Unfortunately, the ``linear'' oracle  is in general a nonconvex subroutine. Therefore, it can be challenging to find efficient solutions for constrained problems in practise.

One remedy for such situations is to solve the ``linear'' oracle only approximately. In the following, we show that we can recover sublinear convergence rates for \rfw, even if we solve the ``linear'' oracle only approximately.  This extension greatly widens the range of possible applications for \rfw. For instance,  while we currently do not have closed form solutions for the``linear'' oracle in some of the examples in section~\ref{sec:constrained-examples}, we could could find approximate solutions via relaxations or iteratively solving the respective subroutine.

In the following, we say that $Z' \in\Xc$ is a \emph{$\delta$-approximate linear minimizer}, if
\begin{align*}
\ip{\grad \; \phi(X_k)}{\Exp_{X_k}^{-1}(Z')} \leq \min_{Z \in \Xc} \ip{\Exp_{X_k}^{-1}(Z)}{\grad \; \phi(X_k)} + \frac{1}{2} \delta \eta M_{\phi} \; .
\end{align*}
We want to show the following sublinear convergence guarantee:
\begin{theorem}\label{thm:conv-delta}
 Let $X^*$ be a minimum of a geodesically convex function $\phi$ and $\delta \geq 0$ the accuracy to which the ``linear'' oracle is solved in each round. Then, the sequence of iterates $X_k$ generated by Algorithm~\ref{alg.rfw} satisfies  
 \begin{align*}
 \phi(X_k)-\phi(X^*) \leq \frac{2M_{\phi}}{k+2} (1+\delta) \; .
 \end{align*}
\end{theorem}
The proof relies on adapting proof techniques from~\citep{clarkson,jaggi2013revisiting} to the Riemannian setting.  It utilizes the following auxiliary lemma:
\begin{lemma}\label{lem:smooth-delta}
For a steps size $\eta \in (0,1)$ and accuracy $\delta$, we have
\begin{align*}
\phi(X_{k+1}) \leq \phi(X_k) - \eta \ip{\grad \; \phi(X_k)}{\Exp_{X_k}(Z')} + \frac{1}{2}\eta^2 M_{\phi}(1+\delta) \; .
\end{align*}
\end{lemma}
A proof of the Lemma can be found in Appendix~\ref{app:A}.
The proof of Theorem~\ref{thm:conv-delta} follows then from Lemma~\ref{lem:induction} and~\ref{lem:smooth-delta} similar to Theorem~\ref{thm:conv-FWR}.

 \section{Computational Experiments}
 \label{sec:expts}
In this section, we will make some remarks on the implementation of Algorithm~\ref{alg.means} and show numerical results for computing the geometric matrix mean for different parameter choices. To evaluate the efficiency of our method, we compare its performance against a selection of state-of-the-art methods. Additionally, we use Algorithm~\ref{alg.means} to compute Wasserstein barycenters of positive definite matrices.
All computational experiments are performed using \matlab.

\subsection{Computational Considerations}
When implementing the algorithm we can take advantage of the positive definiteness of the input matrices. For example, if using \matlab, rather than computing $X^{-1}\log(XA_i^{-1})$, it is more preferable to compute
\begin{equation*}
  X^{-\nfh}\log(X^{\nfh}A_i^{-1}X^{\nfh})X^{\nfh},
\end{equation*}
because both $X^{-\nfh}$ and $\log(X^{\nfh}A_i^{-1}X^{\nfh})$ can be computed by suitable eigendecomposition. In contrast, computing $\log(XA_i^{-1})$ invokes the matrix logarithm (\texttt{logm} in \matlab), which can be much slower. 

To save on computation time, we prefer to use a diminishing scalar as the stepsize in Algorithm~\ref{alg.means}. In principle, this simple stepsize selection could be replaced by a more sophisticated Armijo-like line-search or even exact minimization by solving
\begin{equation}
  \label{eq.9}
  \alpha_k \gets \argmin_{\alpha \in [0,1]}\quad \phi(X_k + \alpha_k(Z_k-X_k)) \; .
\end{equation}
This stepsize tuning may accelerate the convergence speed of the algorithm, but it must be combined with a more computational intensive strategy of ``away'' steps~\cite{marcotte} to obtain a geometric rate of convergence. However, we prefer Algorithm~\ref{alg.means} for its simplicity and efficiency.

Theorems~\ref{thm:conv-FWR} and~\ref{thm:conv-FWR-nonconvex} show that Algorithm~\ref{alg.rfw} converges at the global (non-asymptotic) rates $O(1/\epsilon)$ (g-convex \rfw) and $O(1/\epsilon^2)$ (nonconvex \rfw). However, by further exploiting the simple structure of the constraint set and the ``curvature'' of the objective function, we might obtain a stronger convergence result.

\subsection{Numerical Results}
We present numerical results for computing the Karcher and Wasserstein means for sets of positive definite matrices. We test our methods on both well- and ill-conditioned matrices; the generation of sample matrices is described in the appendix. In addition, we present numerical results for solving the Procrustes problem.
  
\subsubsection{Computing the Riemannian Mean}
\label{sec:exp-karcher}
To test the efficiency of our method, we implemented \rfw (Algorithm~\ref{alg.means}) and its Euclidean counterpart \efw (Algorithm~\ref{alg.efw}) in \matlab and compared its performance on computing the geometric matrix mean against related state-of-the-art methods:
\begin{enumerate}
\item \emph{Riemannian L-BFGS} (textsc{R-LBFGS}\footnote{also known as \textsc{LRBFGS}}) is a quasi-Newton method that iteratively approximates the Hessian for evaluating second-order information~\citep{BFGS}. 
\item \emph{Riemannian Barzilai–Borwein} (\textsc{BB}) is a first-order gradient-based method for constraint and unconstraint optimization. It evaluates second-order information from an approximated Hessian to choose the stepsize~\citep{BB}. We use the \emph{Manopt} version of the \textsc{RBB}.
\item \emph{Matrix Means Toolbox} (\textsc{MM})~\citep{mmtoolbox} is an efficient \matlab toolbox for matrix optimization. Its implementation of the geometric mean problem uses a Richardson-like iteration of the form
\begin{align*}
X_{k+1} \leftarrow X_k - \alpha X_k \sum_{i=1}^n \log (A_i^{-1} X_k) \; ,
\end{align*}
with a suitable $\alpha>0$.
\item \textsc{Zhang}~\citep{zhang2017} is a recently published majorization-minimization method for computing the geometric matrix mean.
\end{enumerate}
In addition, we compare against classic gradient-based optimization methods (\emph{Steepest Decent} and \emph{Conjugate Gradient}), see Appendix C.

Note that this selection reflects a broad spectrum of commonly used methods for Riemannian optimization. It ranges from highly specialized approaches that are targeted to the Karcher mean (\textsc{MM}), to more general and versatile methods (e.g., \textsc{R-LBFGS}). A careful evaluation should take these implementation differences into account, by comparing not only CPU time, but also the loss with respect to the number of iterations. In addition to reporting the loss with respect to CPU time, we include results with respect to the number of iterations in Appendix C. One should further compare algorithmic features such as the number of calls to oracles and loss functions. We perform and further discuss such an evaluation below.

We generate a set \emph{A} of \emph{M} positive definite matrices of size \emph{N} and compute the geometric mean with \efw and \rfw as specified in Algorithm~\ref{alg.means}. To evaluate the performance of the algorithm, we compute the cost function
\begin{align}
f(X,A)=\sum_{i=1}^M \norm{\log \left(	X^{-\frac{1}{2}} A_i X^{-\frac{1}{2}} \right)}_F ^2 \; ,
\end{align}
after each iteration. We further include an experiment, where we report the gradient norm $\norm{\grad \; f}_F$.

Figure~\ref{fig:param} shows performance comparisons of all methods for different parameter choice, initializations and condition numbers. In a second experiment (Fig.~\ref{fig:init-w} and Fig.~\ref{fig:init-i}) we compared three different initialization: The harmonic mean and a matrix $A_i \in A$, as well as the arithmetic-harmonic mean $\frac{1}{2}(AM + HM)$. The latter is known to be a good approximation to the geometric mean and therefore gives an initialization close to the solution. We observe that \efw/ \rfw outperform \textsc{BB} for all parameter choices and initializations. Furthermore, \efw/ \rfw perform very competitively in comparison with \textsc{R-LBFGS}, \textsc{Zhang's Method} and \textsc{MM}. In particular, if the initialization is not very close to the final solution (e.g., the arithmetic-harmonic mean), \efw/ \rfw improve on the state-of-the-art methods. In a third experiment we compared the accuracy reached by \efw and \rfw with \textsc{R-LBFGS} as the (in our experiments) most accurate state-of-the-art method (Fig.~\ref{fig:gnorm}). We observe that \efw and \rfw reach a medium accuracy fast; however, ultimately R-LBFGS reaches a higher accuracy.

To account for the above discussed inaccuracies in CPU time as the performance measure, we complement our performance analysis by reporting the loss with respect to the number of iterations (see Appendix C). In addition, we include a comparison of the number of internal calls to cost and gradient functions (Figure~\ref{tab:calls}) for all methods. Note that this comparison evaluates algorithmic features and is therefore machine-independent. The reported numbers were obtained by averaging counts over ten experiments with identical parameter choices. We observe that \textsc{Rfw} and \textsc{Efw} avoid internal calls to the gradient and cost function, requiring only a single gradient evaluation per iteration. This results in the acceleration (i.e., faster convergence) observed in the plots.
\begin{figure}[!htbp]
\centering
     \subfloat[Well-conditioned]{\includegraphics[width=5.1cm]{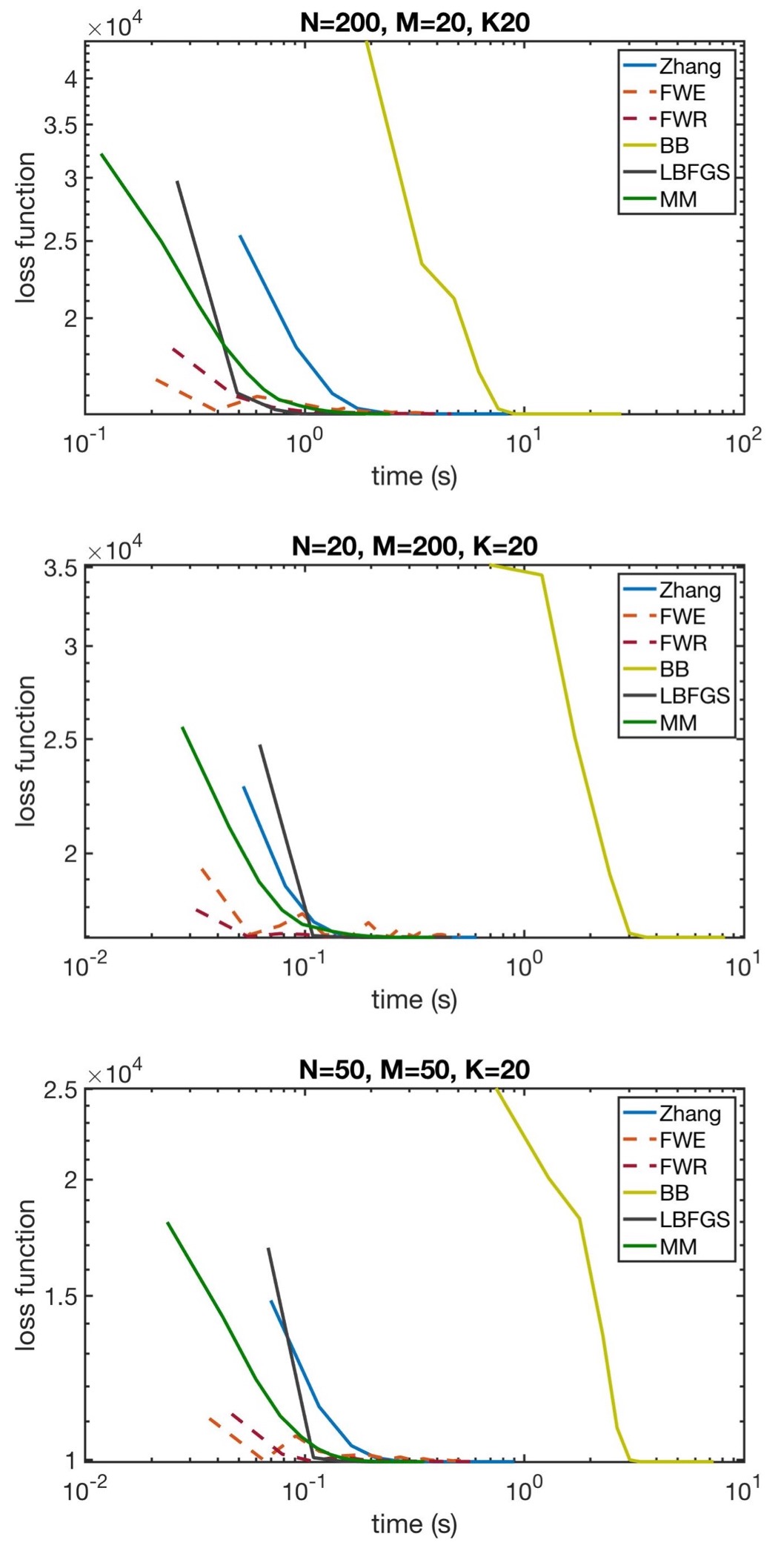}}
     \subfloat[Ill-conditioned]{\includegraphics[width=5cm]{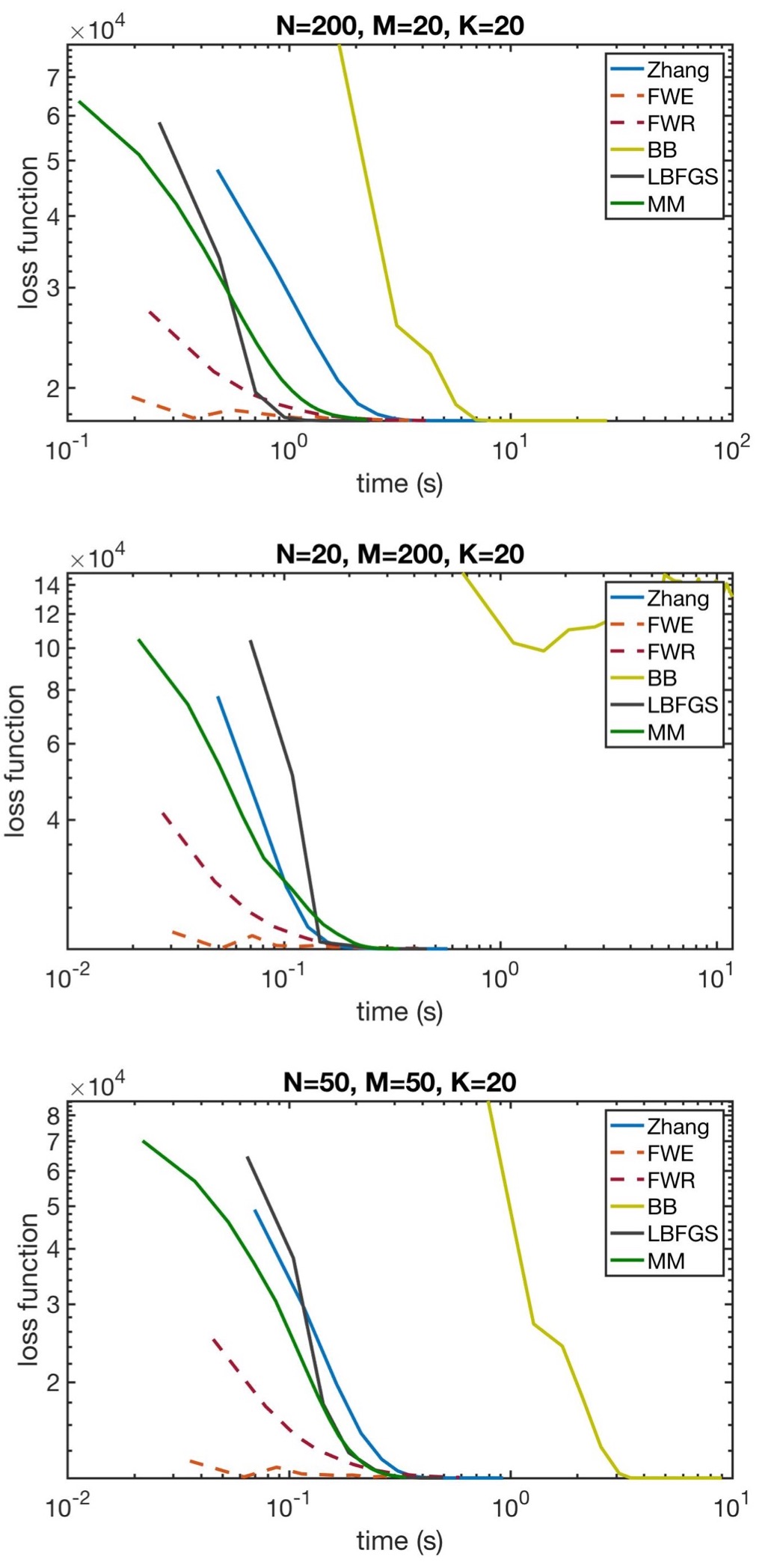}}
     \caption{Performance of \efw and \rfw in comparison with state-of-the-art methods for well-conditioned and ill-conditioned inputs of different sizes ($N$: size of matrices, $M$: number of matrices, $K$: maximum number of iterations). All tasks are initialized with the harmonic mean $x_0=HM$.}
     \label{fig:param}
\end{figure}
\begin{figure}[!htbp]
     \subfloat[$x_0=A_1$]{\includegraphics[width=0.35\textwidth]{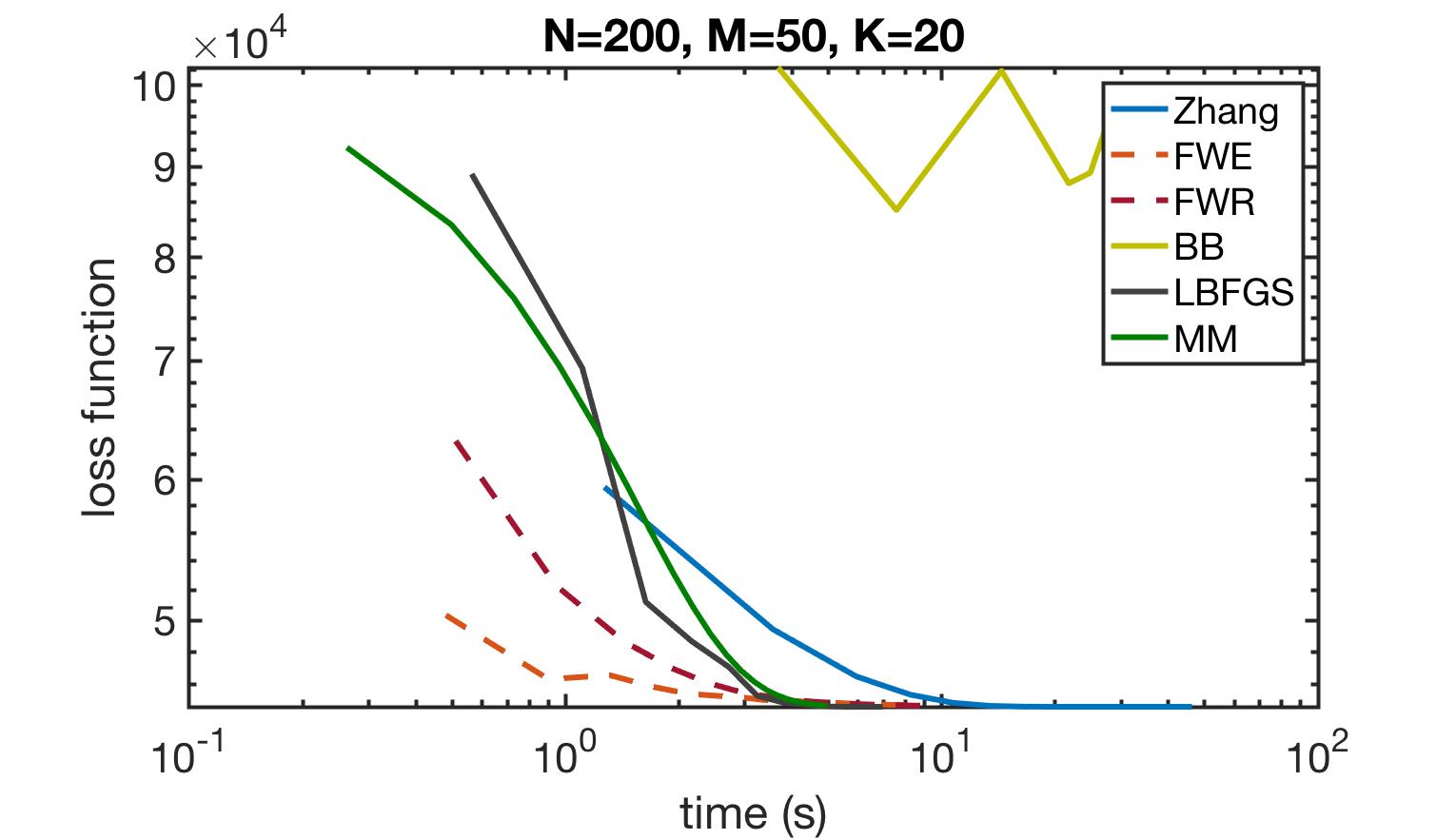}}
     \subfloat[$x_0=\frac{1}{2}(AM + HM)$]{\includegraphics[width=0.35\textwidth]{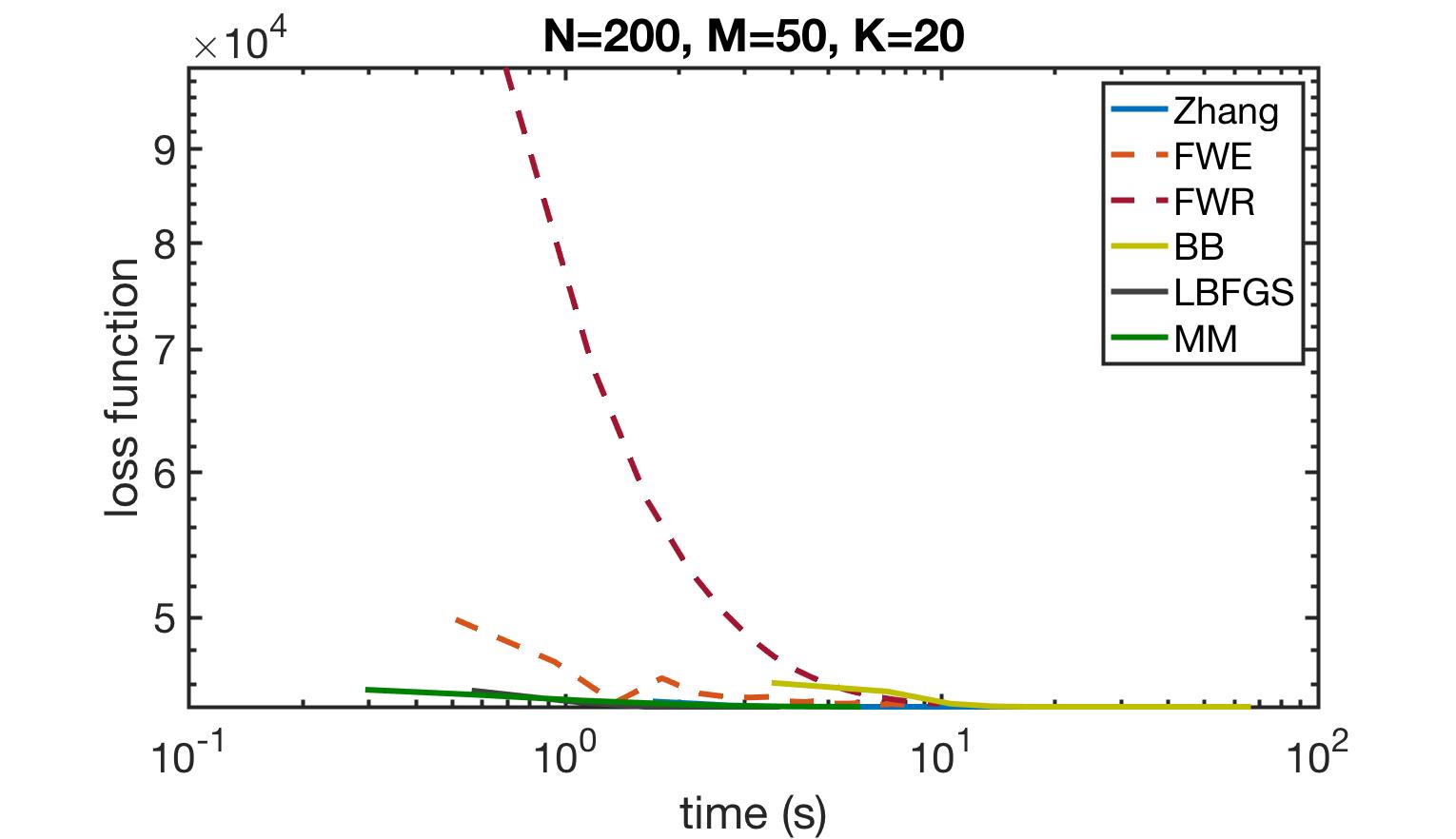}}
     \subfloat[$x_0=HM$]{\includegraphics[width=0.35\textwidth]{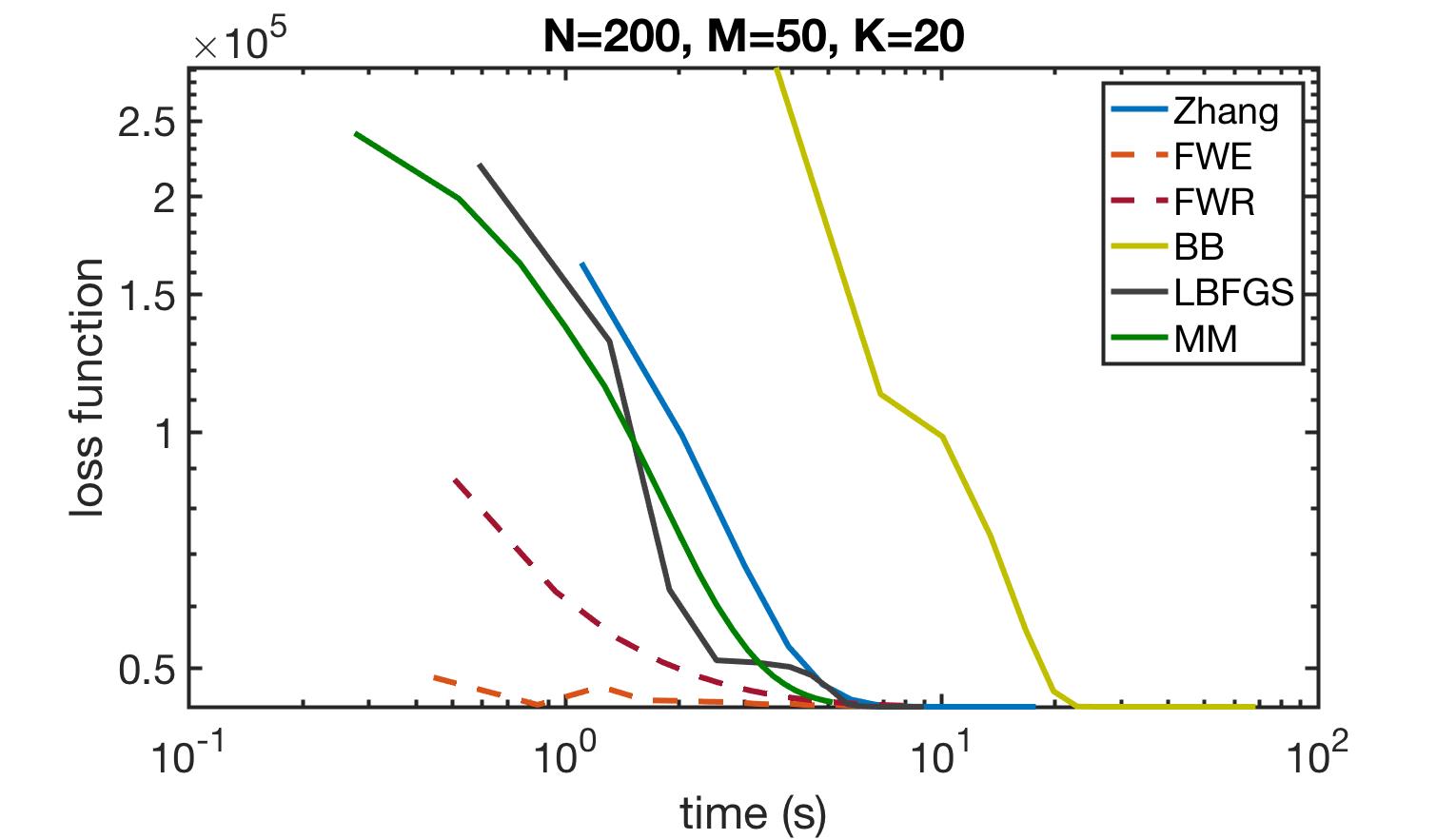}}
 \caption{Performance of \efw and \rfw in comparison with state-of-the-art methods for well-conditioned inputs and different initializations.} 
	\label{fig:init-w}
\end{figure}
\begin{figure}[!htbp]
     \subfloat[$x_0=A_1$]{\includegraphics[width=0.35\textwidth]{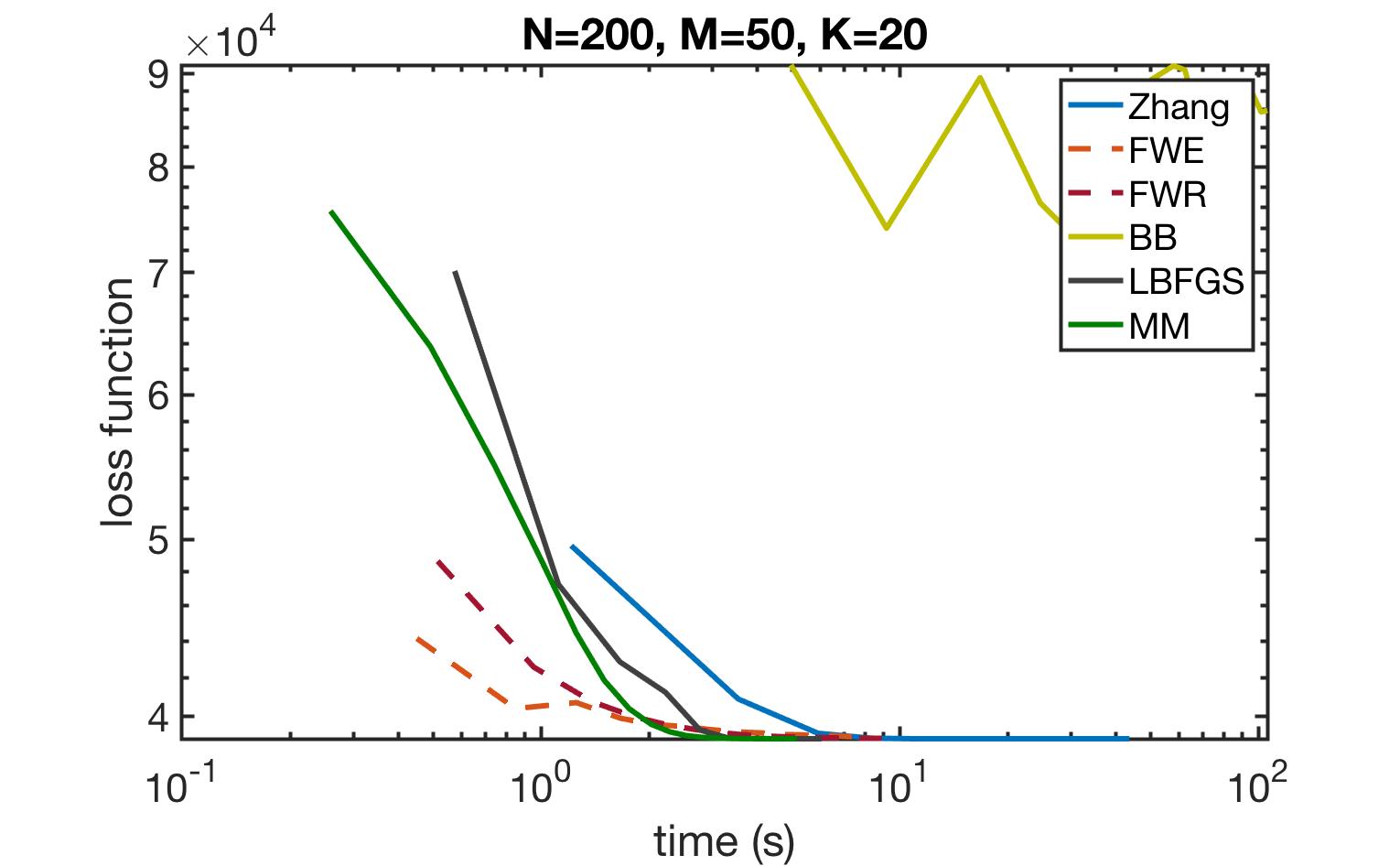}}
     \subfloat[$x_0=\frac{1}{2}(AM + HM)$]{\includegraphics[width=0.37\textwidth]{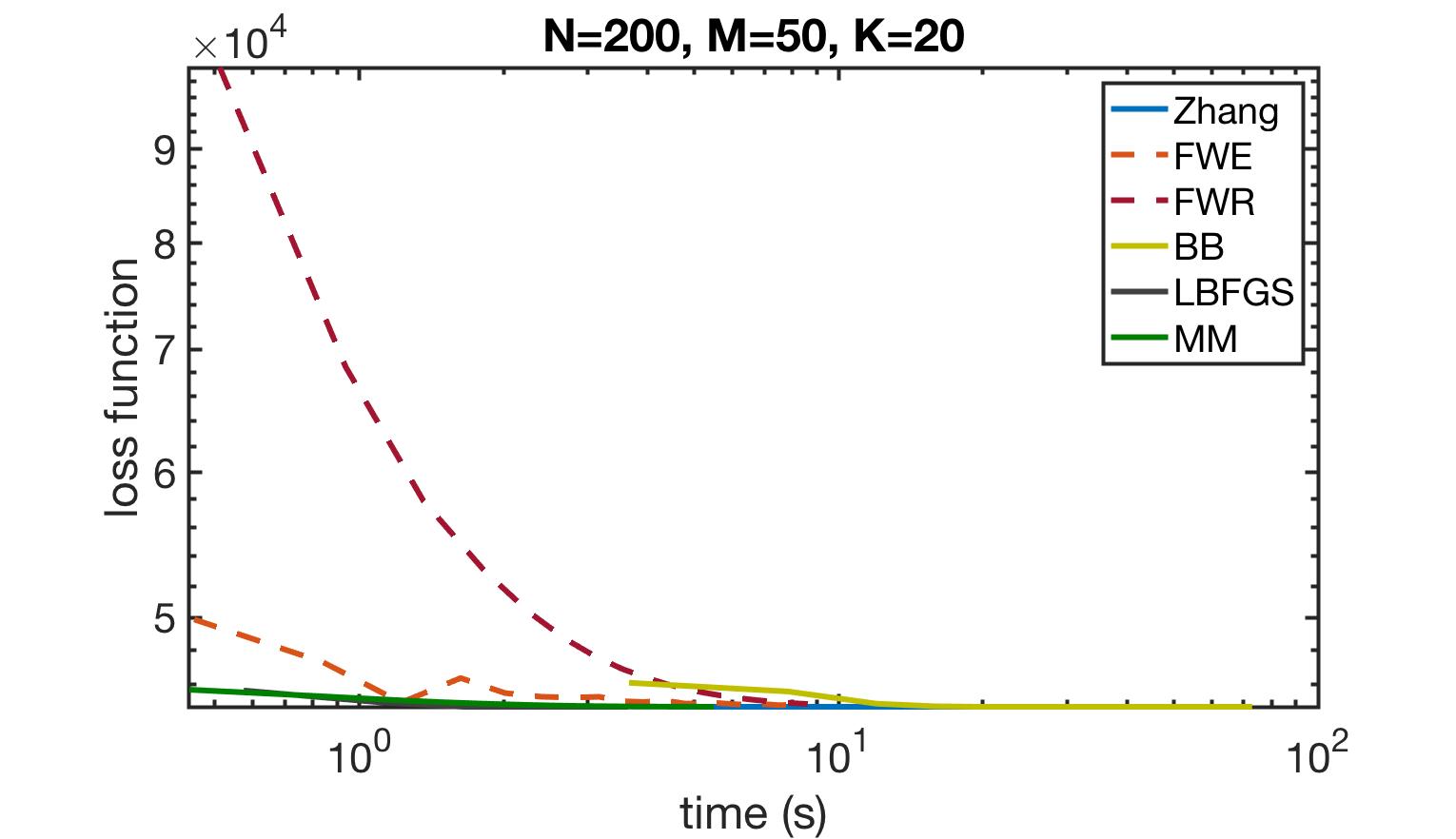}}
     \subfloat[$x_0=HM$]{\includegraphics[width=0.35\textwidth]{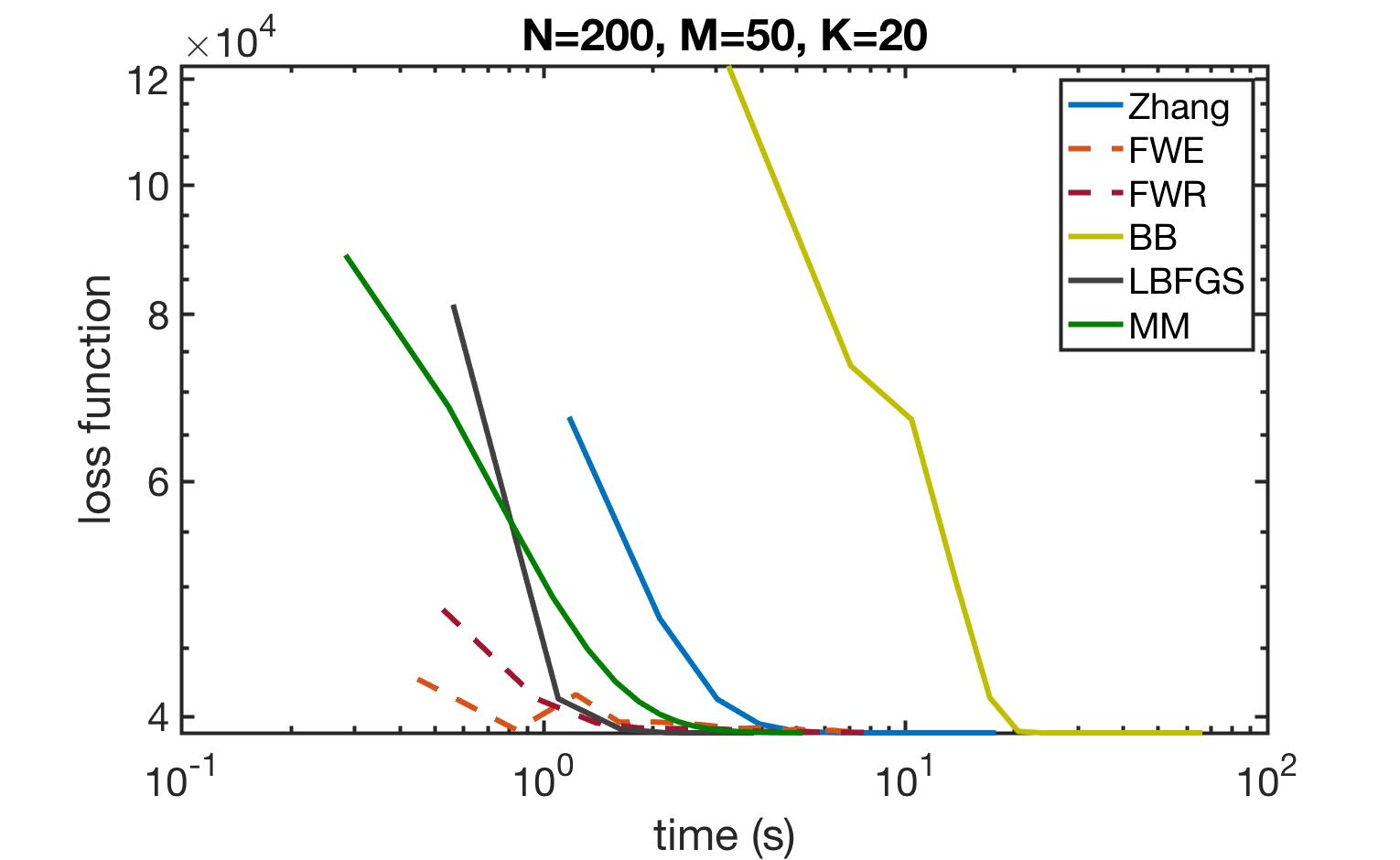}}
\caption{Performance of \efw and \rfw in comparison with state-of-the-art methods for ill-conditioned inputs and different initializations.} 
	\label{fig:init-i}
\end{figure}
\begin{figure}[!htbp]
     \includegraphics[width=\textwidth]{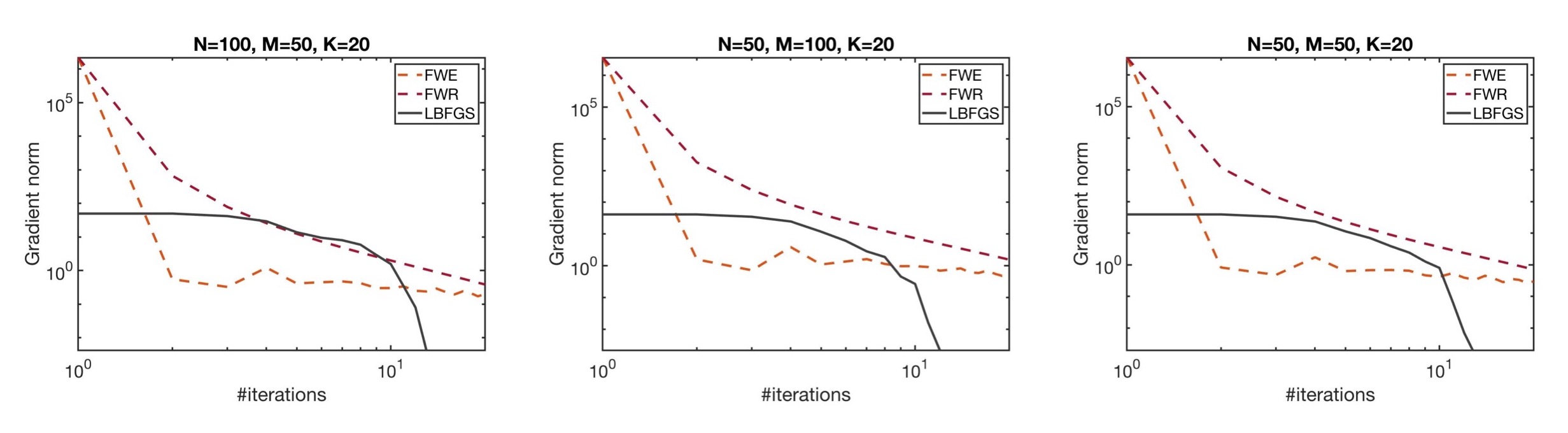}
\caption{Gradient norms at each iteration for \efw and \rfw in comparison with R-LBFGS.} 
\label{fig:gnorm}
\end{figure}

\subsubsection{Computing Wasserstein Barycenters}
To demonstrate the versatility of our approach, we use \rfw to compute Wasserstein barycenters. For this, we adapt the above described setup to implement Algorithm~\ref{alg.means}. Fig.~\ref{fig:wm} shows the performance of \rfw on both well- and ill-conditioned matrices. We compare three different initializations in each experiment: In (1), we choose an arbitrary element of $A$ as starting point ($X_0 \sim \mathcal{U}(A)$). The other experiments start at the lower and upper bounds of the constraint set, i.e., (2) $X_0$ is set to the arithmetic mean of $A$ and (3) $X_0=\alpha I$, where $\alpha$ is the smallest eigenvalue over $A$. Our results suggest that \rfw performs well when initialized from any point in the feasible region, even on sets of ill-conditioned matrices.
\begin{figure}[!h]
\centering
     \subfloat[Well-conditioned]{\includegraphics[width=5cm]{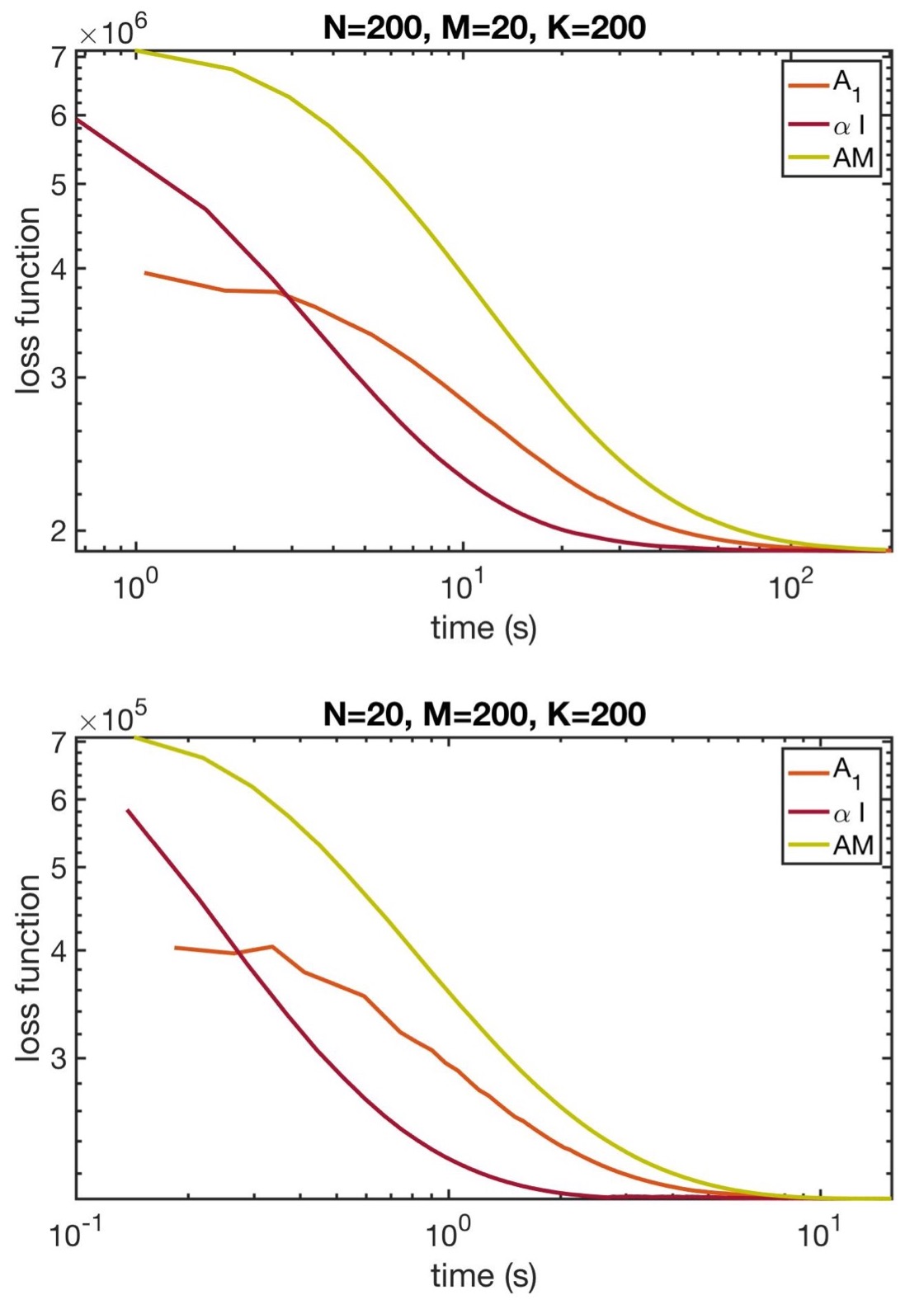}}
     \subfloat[Ill-conditioned]{\includegraphics[width=5.2cm]{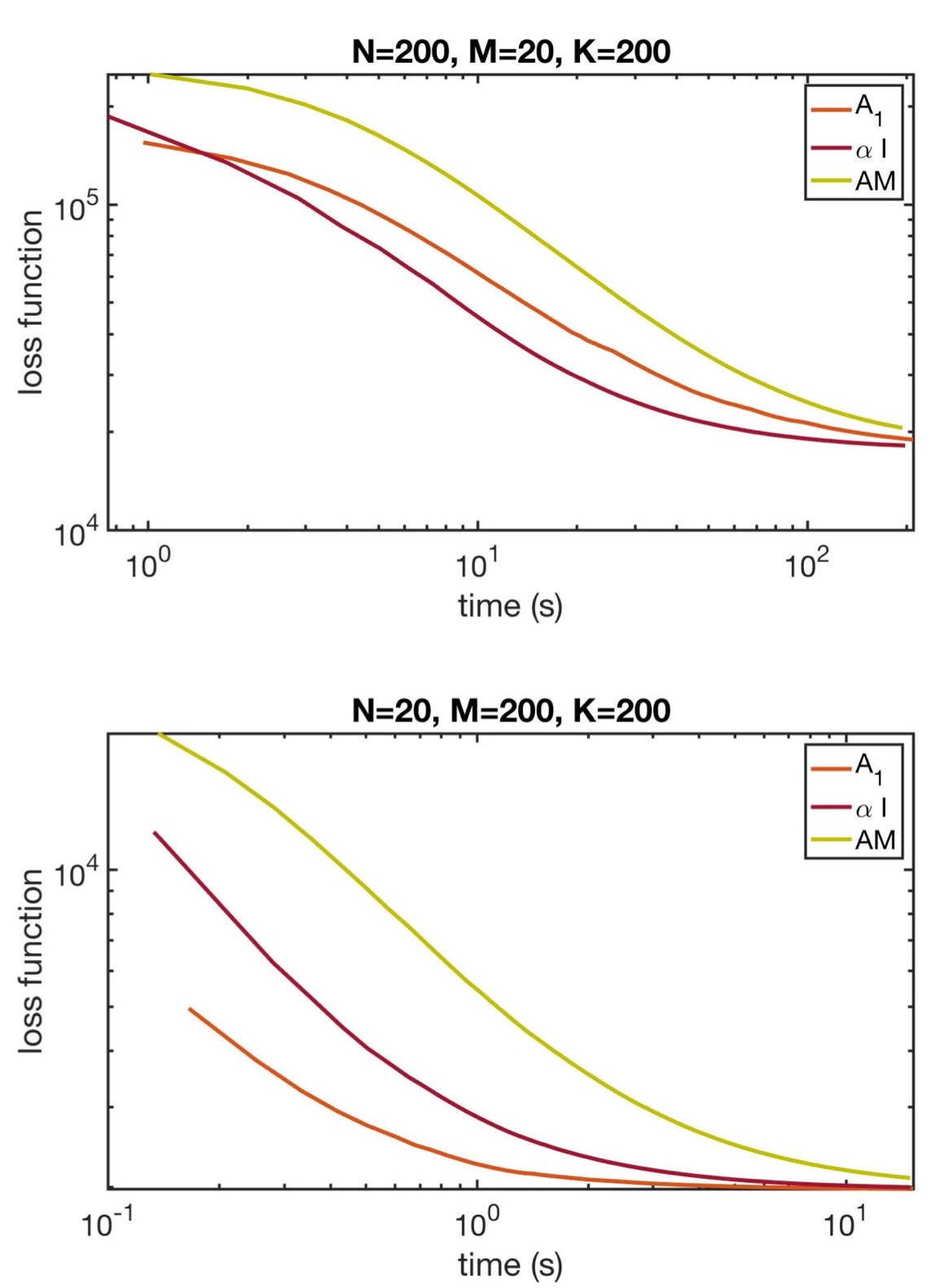}}
\caption{Performance \rfw for computing the Wasserstein mean for $M$ well- and $M$ ill-conditioned matrices of size $N$ for $K$ iteration and three initializations: (1) $X_0$ is set to one of the matrices in the set $A$, i.e., $A_i \in A$; (2) $X_0$ is initialized as the arithmetic mean of $A$, i.e., the upper bound of the constraint set; and (3) $X_0$ is set to the lower bound $\alpha I$ of the constraint, where $\alpha$ is the minimal eigenvalue over $A$.}
	\label{fig:wm}
\end{figure}
%

\subsubsection{Procrustes problem}
We implement \rfw for the special orthogonal group, focusing on the \emph{Procrustes problem} (see Algorithm~\ref{alg.procrustes}). Fig.~\ref{fig:procrustes} shows the performance of \rfw for different input sizes and initialization. 
The loss function is chosen as the optimization gap, where we compare with the analytically known optimal solution
\begin{align*}
X^* = U V^T \; ,
\end{align*}
where $AB^T = U \Sigma V^T$ is a singular value decomposition.

For initialization, we compare a random rotation matrix (i.e., $X_0 \sim \mathcal{U}(SO(n))$) with the identity matrix (i.e., $X_0=I_n$ ). In our experiments, initialization with the identity achieved superior performance. 
\begin{figure}[!h]
     \includegraphics[width=\textwidth]{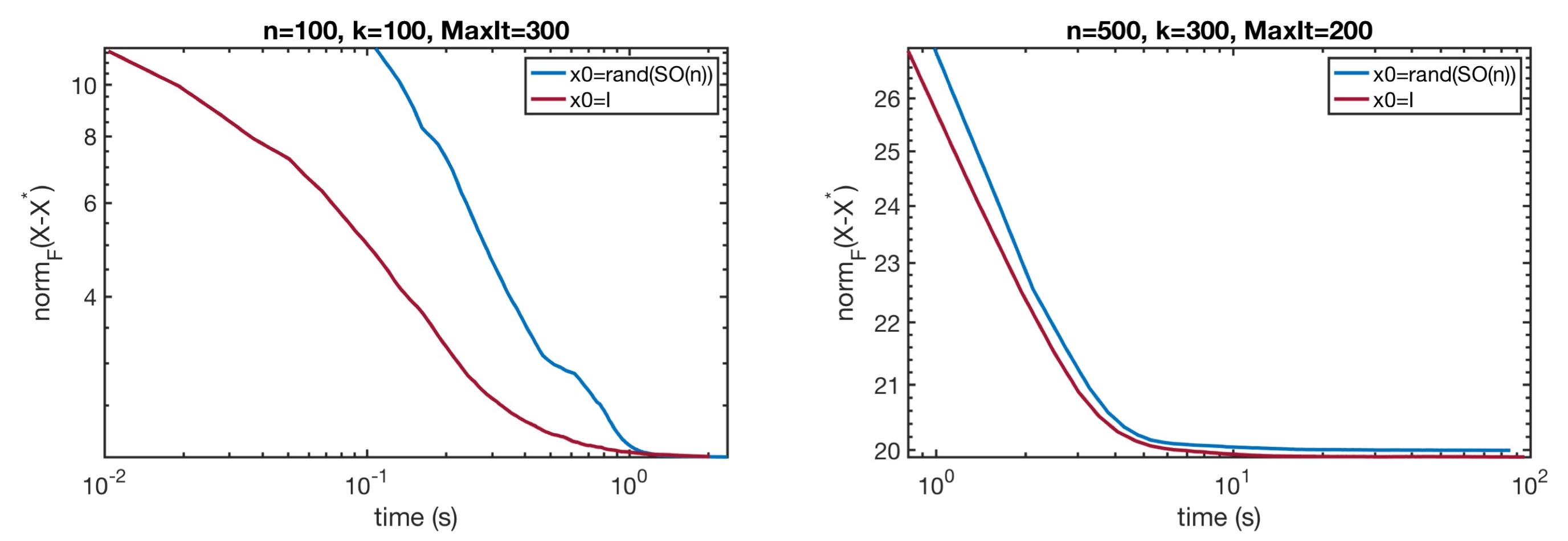}
\caption{Performance of \rfw for computing a solution of the procrustes problem. We consider two initializations, (i) $X_0 \sim \mathcal{U}(SO(n))$ and (ii) $X_0=I_n$ for input matrices $A, B \in \mathbb{R}^{n \times k}$. \emph{MaxIt} denotes the number of iterations.} 
\label{fig:procrustes}
\end{figure}

\section{Discussion}
We presented a Riemannian version of the classic Frank-Wolfe method that enables constraint optimization on Riemannian (more precisely, on Hadamard) manifolds. Similar to the Euclidean case, we recover sublinear convergence rates for Riemannian Frank-Wolfe for both geodesically convex and nonconvex objectives. Under the stricter assumption of $\mu$-strongly g-convex objectives and a strict interiority condition on the constraint set, we show that even linear convergence rates can be attained by Riemannian Frank-Wolfe. To our knowledge, this work represents the first extension of Frank-Wolfe methods to a manifold setting. 

In addition to the general results, we present an efficient algorithm for optimization on Hermitian positive definite matrices. The key highlight of this specialization is a closed-form solution to the Riemannian ``linear'' oracle needed by Frank-Wolfe (this oracle involves solving a non-convex problem). While we focus on the specific problem of computing the Karcher mean (also known as the Riemannian centroid or geometric matrix mean), the derived closed form solutions apply to more general objective functions, and should be of wider interest for related non-convex and g-convex problems. To demonstrate this versatility, we also included an application of \rfw to the computation of Wasserstein barycenters on the Gaussian density manifold. Furthermore, we show that the Riemannian ``linear'' oracle can be solved in closed form also for the special orthogonal group. In future work, we hope to explore other constraint sets and matrix manifolds that admit an efficient solution to the Riemannian ``linear'' oracle.

Our algorithm is shown to be very competitive against a variety of established and recently proposed approaches~\cite{zhang2017, BFGS,BB} providing evidence for its applicability to large-scale statistics and machine learning problems. In follow up work~\citep{srfw}, we show that \rfw extends to nonconvex stochastic settings, further increasing the efficiency and versatility of Riemannian Frank-Wolfe methods. Exploring the use of this class of algorithms in large-scale machine learning applications is a promising avenue for future research.


\section*{Acknowledgements}
SS acknowledges support from NSF-IIS-1409802. This work was done during a visit of MW at MIT supported by a Dean’s grant from the Princeton University Graduate School.
The authors thank Charles Fefferman for helpful comments on the manuscript.

\bibliographystyle{plainnat}
\bibliography{quellen}

\appendix

\section{Omitted proofs}\label{app:A}

\subsection{Convergence of nonconvex RFW}
We recall the convergence result for \rfw in the nonconvex setting and then provide the proof that was omitted in the main text.
\begin{theorem}[Theorem~\ref{thm:conv-FWR-nonconvex}]
  Let $\tilde{G}_k := \min_{0 \leq k \leq K} G(X_k)$ (where $G(X_k)$ denotes the Frank-Wolfe gap at $X_k$). After $K$ iterations of Algorithm~\ref{alg.rfw}, we have $\tilde{G}_k \leq \frac{\max \lbrace	2 h_0, M_{\phi}	\rbrace}{\sqrt{K+1}}$.
\end{theorem}
\begin{proof}
We follow the proof of Theorem~\ref{thm:conv-FWR} to get a bound on the update and then substitute the FW gap in the directional term:
\begin{align}
\phi(X_{k+1}) &\leq \phi(X_k) + s_k \underbrace{\ip{\grad \phi(X_k)}{\Exp_{X_k}^{-1}(Z_k)}}_{=-G(X_k)} + \frac{s_k ^2 M_{\phi}}{2} \\
&\leq \phi(X_k)  - s_k G(X_k) + \frac{s_k ^2 M_{\phi}}{2} \; .
\end{align}
The step size that minimizes the $s_k$-terms on the right hand side is given by
\begin{equation}
s_k ^{*} = \min \bigg \lbrace \frac{G(X_k)}{M_{\phi}}, 1 \bigg \rbrace \; ,
\end{equation}
which gives
\begin{equation}
\phi(X_{k+1}) \leq \phi(X_k) - \min \bigg \lbrace \frac{G(X_k)^2}{2 M_{\phi}}, G(X_k) - \frac{M_{\phi}}{2} \cdot \mathbbm{1}_{\lbrace G(X_k) > M_{\phi}	\rbrace} \bigg \rbrace \; .
\end{equation}
Iterating over this scheme gives after $K$ steps:
\begin{equation}
\phi(X_{k+1}) \leq \phi(X_0) - \sum_{k=0}^K \min \bigg \lbrace \frac{G(X_k)^2}{2 M_{\phi}}, G(X_k) - \frac{M_{\phi}}{2} \cdot \mathbbm{1}_{\lbrace G(X_k) > M_{\phi}	\rbrace} \bigg \rbrace \; .
\end{equation}
Let $\tilde{G}_K = \min_{0\leq k\leq K} G(X_k)$ be the minimum FW gap up to iteration step $K$. Then,
\begin{equation}
\phi(X_{K+1})\leq \phi(X_0) - (K+1) \cdot \min \bigg \lbrace	\frac{\tilde{G}_K ^2}{2 M_{\phi}}, \tilde{G}_K - \frac{M_{\phi}}{2} \cdot \mathbbm{1}_{\lbrace \tilde{G}_K > M_{\phi}	\rbrace} \bigg	\rbrace \; .
\end{equation}
The minimum can attend two different values, i.e. we have to consider two different cases. For the following analysis, we introduce the \emph{initial global suboptimality} 
\begin{equation}
h_0 := \phi(X_0) - \min_{\hat{Z} \in \Xc} \phi(X_k) \; .
\end{equation}
Note that $h_0 \geq \phi(X_0) - \phi(X_{K+1})$.
\begin{enumerate}
\item \underline{$\tilde{G}_K \leq M_{\phi}$:}\\
\begin{align}
\phi(X_{K+1}) &\leq \phi(X_0) - (K+1) \frac{\tilde{G}_K ^2}{2 M_{\phi}} \\
\underbrace{\phi(X_{0}) - \phi(X_{K+1})}_{\leq h_0} &\geq (K+1) \frac{\tilde{G}_K ^2}{2 M_{\phi}} \\
\sqrt{\frac{2 h_0 M_{\phi}}{K+1}} &\geq \tilde{G}_K \; .
\label{eq:FWE-b1}
\end{align}
\item \underline{$\tilde{G}_K > M_{\phi}$:}
\end{enumerate}
\begin{align}
\phi(X_{K+1}) &\leq \phi(X_0) - (K+1) \left( \tilde{G}_K - \frac{M_{\phi}}{2} \right)\\
h_0 &\geq (K+1) \left( \tilde{G}_K - \frac{M_{\phi}}{2} \right)\\
\frac{h_0}{K+1} + \frac{M_{\phi}}{2} &\geq \tilde{G}_K \; . 
\label{eq:FWE-b2}
\end{align}
The bound on the second case can be refined with the observation that $\tilde{G}_K > M_{\phi}$ iff $h_0 > \frac{M_{\phi}}{2}$: Assuming $\tilde{G}_K > M_{\phi}$, we have
\begin{align}
 \frac{h_0}{K+1} + \frac{M_{\phi}}{2} \geq \tilde{G}_K > M_{\phi} \\
 \Rightarrow \frac{h_0}{K+1} > \frac{M_{\phi}}{2} \\
 \Rightarrow \frac{2 h_0}{M_{\phi}} > K+1 \; .
 \label{eq:b2-tight}
 \end{align} 
 However, then $h_0 \leq \frac{M_{\phi}}{2}$ would imply $1>K+1$ contradicting $K>0$; i.e. $\tilde{G}_K > M_{\phi}$ iff $h_0 > \frac{M_{\phi}}{2}$. With this we can rewrite the bound for the second case:
 \begin{align}
 \frac{h_0}{K+1} + \frac{M_{\phi}}{2} &= \frac{h_0}{\sqrt{K+1}} \left(	\frac{1}{\sqrt{K+1}} + \frac{M_{\phi}}{2h_0} \sqrt{K+1}		\right) \\
&\leq^{\dagger} \frac{h_0}{\sqrt{K+1}} \left(	\frac{1}{\sqrt{K+1}} + \sqrt{\frac{M_{\phi}}{2h_0}}		\right) \\
&\leq^{\ddagger} \frac{h_0}{\sqrt{K+1}} \underbrace{\left(	\frac{1}{\sqrt{K+1}} + 1 \right)}_{\leq 2} \\
&\leq \frac{2h_0}{\sqrt{K+1}} \; .
 \end{align}
 Here, ($\dagger$) holds due to Eq. (\ref{eq:b2-tight}) and ($\ddagger$) follows from $h_0>\frac{M_{\phi}}{2}$. \\
 
In summary, we now have
\begin{equation}
\tilde{G}_K \leq \begin{cases}
\frac{2h_0}{\sqrt{K+1}}, &\mbox{if }h_0>\frac{M_{\phi}}{2} \\
\sqrt{\frac{2h_0 M_{\phi}}{K+1}}, &\mbox{else} \; .
\end{cases}
\label{eq:FWE-bounds}
\end{equation}
The upper bounds on the FW gap given by Eq. (\ref{eq:FWE-bounds}) imply convergence rates of $O(1/\sqrt{K})$, completing the proof.
\end{proof}

\subsection{Approximately solving the Riemannian ``linear'' oracle}
We want to prove the following lemma, stated in the main text:
\begin{lemma}\label{lem:smooth-delta}
For a steps size $\eta \in (0,1)$ and accuracy $\delta$, we have
\begin{align*}
\phi(X_{k+1}) \leq \phi(X_k) - \eta \ip{\grad \; \phi(X_k)}{\Exp_{X_k}(z')} + \frac{1}{2}\eta^2 M_{\phi}(1+\delta) \; .
\end{align*}
\end{lemma}
\begin{proof}
We use again the notation $Y = \gamma_{XY}(\eta)$ for a point on the geodesic joining $X$ and $Z'$.  By Lemma~\ref{lem:lip-bound-gen}, we have
\begin{align*}
\phi(Y) \leq \phi(X) + \eta \ip{\grad \; \phi(X)}{\Exp_{X}^{-1}(Z')} + \frac{1}{2} \eta^2 M_{\phi}.
\end{align*}
By construction, we have
\begin{align*}
\ip{\grad \; \phi(X)}{\Exp_{X}^{-1}(Z')} &\leq \min_{Y \in \Xc} \ip{\grad \; \phi(X)}{\Exp_{X}^{-1}(Y)} + \frac{1}{2} \delta \eta M_{\phi} \\
&\leq -(\phi(X) - \phi(X^*)) + \frac{1}{2} \delta \eta M_{\phi} \; .
\end{align*}
Inserting this above,  the claim follows as
\begin{align*}
\phi(Y) &\leq \phi(X) - \eta (\phi(X) - \phi(X^*)) + \frac{1}{2} \delta \eta^2 M_{\phi} + \frac{1}{2} \eta^2 M_{\phi} \\
&\leq \phi(X) - \eta (\phi(X) - \phi(X^*)) + \frac{1}{2} \delta \eta^2 M_{\phi} (1 + \delta) \; .
\end{align*}
\end{proof}

\color{black}

\section{Non-convex Euclidean Frank-Wolfe}
\label{sec:e-lo}
We make a short digression here to mention non-convex Euclidean Frank-Wolfe (\efw) as a potential alternative approach to solving~\eqref{eq:18}. Indeed, the constraint set therein is not only g-convex, it is also convex in the usual sense. Thus, one can also apply an \efw scheme to solve~\eqref{eq:18}, albeit with a slower convergence rate. In general, a g-convex set $\Xc$ need not be Euclidean convex, so this observation does not always apply. 

\efw was recently analyzed in~\citep{LJ16}; the convergence rate reported below adapts one of its main results.
The key difference, however, is that due to g-convexity, we can translate the local result of~\citep{LJ16} into a global one for problem~\eqref{eq:18}.
\begin{theorem}[Convergence \fw-gap (\citep{LJ16})]
  \label{thm:conv-FWE}
  Define $\tilde{G}_k := \min_{0\leq k\leq K} G(X_k)$, where $G(X_k)=\max_{Z_k \in \Xc} \ip{Z_k - X_k}{- \nabla \phi(X_k)}$ is the \fw-gap (i.e., measure of convergence) at $X_k$. Define the curvature constant $$M_{\phi} := \sup_{\substack{X,Y,Z \in \Xc \\ Y=X+s(Z-X)}} \tfrac{2}{s^2} \left[ \phi(Y) - \phi(X) - \ip{\nabla \phi(X)}{Y-X} \right].$$
  Then, after $K$ iterations, \efw satisfies $\tilde{G}_K \leq \frac{\max \; \lbrace 2 h_0, M_\phi \rbrace}{\sqrt{K+1}}$. 
\end{theorem}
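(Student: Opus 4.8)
The plan is to reproduce the telescoping estimate of \citep{LJ16}; the only genuinely new bookkeeping relative to the convex analysis of Theorem~\ref{thm:conv-FWR} is that here the step size must be taken \emph{constant} and horizon-dependent, $s_k \equiv \frac{1}{\sqrt{K+1}}$, rather than the $\frac{2}{k+2}$ schedule. Throughout, write $h_0 := \phi(X_0) - \phi^*$ for the initial suboptimality, where $\phi^* := \min_{X \in \Xc}\phi(X)$; note $X_{K+1}$ stays feasible because $\Xc$ is (Euclidean) convex, so $\phi(X_{K+1}) \ge \phi^*$.

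First I would record the per-step descent inequality. Since $X_{k+1} = X_k + s_k(Z_k - X_k)$ with $Z_k$ the output of the Euclidean linear oracle, plugging $Y = X_{k+1}$, $X = X_k$, $Z = Z_k$ into the definition of the curvature constant $M_\phi$ gives immediately
$$\phi(X_{k+1}) \le \phi(X_k) + s_k\ip{\nabla\phi(X_k)}{Z_k - X_k} + \tfrac{1}{2}s_k^2 M_\phi.$$
Because $Z_k$ minimizes $\ip{\nabla\phi(X_k)}{\,\cdot\,- X_k}$ over $\Xc$, we have $\ip{\nabla\phi(X_k)}{Z_k - X_k} = -G(X_k)$, so the inequality rearranges to $s_k\,G(X_k) \le \phi(X_k) - \phi(X_{k+1}) + \tfrac{1}{2}s_k^2 M_\phi$.

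Next I would sum this over $k = 0,\ldots,K$ with $s_k \equiv \frac{1}{\sqrt{K+1}}$. The function-value differences telescope, so the right-hand side collapses to $\phi(X_0) - \phi(X_{K+1}) + \tfrac{1}{2}M_\phi\sum_{k=0}^K s_k^2 \le h_0 + \tfrac{1}{2}M_\phi$, using $\sum_{k=0}^K s_k^2 = 1$ and $\phi(X_{K+1}) \ge \phi^*$. On the left, lower-bounding each $G(X_k)$ by $\tilde{G}_K = \min_{0\le k\le K}G(X_k)$ and using $\sum_{k=0}^K s_k = \sqrt{K+1}$ yields $\sqrt{K+1}\,\tilde{G}_K \le h_0 + \tfrac{1}{2}M_\phi$. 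Finally the elementary bound $h_0 + \tfrac{1}{2}M_\phi \le \max\{2h_0, M_\phi\}$ (since $h_0 \le \tfrac12\max\{2h_0,M_\phi\}$ and $\tfrac12 M_\phi \le \tfrac12\max\{2h_0,M_\phi\}$) gives the claimed $\tilde{G}_K \le \frac{\max\{2h_0, M_\phi\}}{\sqrt{K+1}}$.

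There is no real obstacle inside the estimate; the substantive points are conceptual rather than computational. One is the step-size choice: the nonconvex gap bound only achieves the weaker $O(1/\sqrt{K})$ rate, and the constant schedule $s_k = 1/\sqrt{K+1}$ is precisely what balances $\sum s_k$ against $\sum s_k^2$. The other is interpretation, which is where the g-convexity of \eqref{eq:18} enters: the bound certifies only that the \fw-gap vanishes, i.e. that some iterate is an approximate \emph{stationary} point, but for the g-convex objectives considered a point satisfying the first-order optimality condition is globally optimal (cf.\ Proposition~\ref{prop.opt}), so driving $\tilde{G}_K \to 0$ upgrades \citep{LJ16}'s local stationarity guarantee into convergence to the global minimum of~\eqref{eq:18}.
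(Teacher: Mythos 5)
Your proof is correct, but it takes a genuinely different route from the paper's. The paper (following \citep{LJ16}) uses the \emph{adaptive} step size $s_k^* = \min\{G(X_k)/M_\phi,\,1\}$, which minimizes the right-hand side of the per-step descent inequality; this forces a two-case analysis ($\tilde{G}_K \le M_\phi$ versus $\tilde{G}_K > M_\phi$), an indicator-function term, and a further refinement showing $\tilde{G}_K > M_\phi$ iff $h_0 > M_\phi/2$ before the two branches can be merged into the stated bound. You instead fix the \emph{constant}, horizon-dependent step $s_k \equiv 1/\sqrt{K+1}$, telescope, and use $\sum_k s_k^2 = 1$, $\sum_k s_k = \sqrt{K+1}$ together with the elementary $h_0 + \tfrac12 M_\phi \le \max\{2h_0, M_\phi\}$ (average $\le$ max applied to $2h_0$ and $M_\phi$). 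Your argument is shorter and avoids the case split entirely; what the paper's version buys in exchange is an \emph{anytime} guarantee (no need to fix $K$ in advance, though one must know or bound $M_\phi$ to run it), and in the regime $\tilde{G}_K \le M_\phi$ it actually yields the slightly sharper constant $\sqrt{2h_0 M_\phi}$, the geometric mean rather than the maximum. Both of you prove the theorem for a step-size schedule that differs from the $2/(k+2)$ rule actually used in Algorithm~\ref{alg.gm2}, so neither analysis literally covers the implemented method; that mismatch is inherited from the paper, not introduced by you. Your closing remark about g-convexity upgrading the stationarity guarantee to global optimality correctly captures the point the paper makes in the text surrounding the theorem.
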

The proof (a simple adaption of~\citep{LJ16}) is similar to that of Theorem~\ref{thm:conv-FWR-nonconvex}; therefore, we omit it here. Finally, to implement \efw, we need to also efficiently implement its linear oracle.  Theorem~\ref{thm:trace} below shows how to; the proof is similar to  that of Theorem~\ref{thm:logtrace}.
It is important to note that this linear oracle involves solving a simple SDP, but it is unreasonable to require use of an SDP solver at each iteration.  Theorem~\ref{thm:trace} thus proves to be crucial, because it yields an easily computed closed-form solution. 
\begin{theorem}
  \label{thm:trace}
  Let $L, U \in \pd_d$ such that $L \prec U$ and $S \in \H_d$ is arbitrary. Let $U-L=P^*P$, and $PSP^*=Q\Lambda Q^*$. Then, the solution to 
  \begin{equation}
    \label{eq.12}
    \min_{L \preceq Z \preceq U}\quad \trace(SZ),
  \end{equation}
  is given by $Z = L + P^*Q[-\sgn(\Lambda)]_+Q^*P$.
\end{theorem}
\begin{proof}
  First, shift the constraint to $0 \preceq X-L \preceq U-L$; then factorize $U-L=P^*P$, and introduce a new variable $Y=X-L$. Therewith, problem~\eqref{eq.12} becomes
  \begin{equation*}
    \min_{0 \preceq Y \preceq U-L}\quad\trace(S(Y+L)) \; .
  \end{equation*}
  If $L=U$, then clearly $P=0$, and $X=L$ is the solution. Assume thus, $L \prec U$, so that $P$ is invertible. Thus, the above problem further simplifies to
  \begin{equation*}
    \min_{0 \preceq (P^{*})^{-1}YP^{-1} \preceq I}\quad\trace(SY) \; .
  \end{equation*}
  Introduce another variable $Z=P^*YP$ and use circularity of trace to now write  
  \begin{equation*}
    \min_{0 \preceq Z \preceq I}\quad\trace(PSP^*Z) \; .
  \end{equation*}
  To obtain the optimal $Z$, first write the eigenvalue decomposition
  \begin{equation}
    \label{eq.14}
    PSP^* = Q\Lambda Q^*.
  \end{equation}
  Lemma~\ref{lem.weyl} implies that the trace will be maximized when the eigenvectors of $PSP^*$ and $Z$ align and their eigenvalues match up. Since $0 \preceq Z \preceq I$, we therefore see that $Z=QDQ^*$ is an optimal solution, where $D$ is diagonal with entries
  \begin{equation}
    \label{eq.15}
    d_{ii} =
    \begin{cases}
      1 &\text{if}\ \lambda_i(Y) < 0\\
      0 &\text{if}\ \lambda_i(Y) \geq 0
    \end{cases}\quad\implies D=[-\sgn(\Lambda)]_+.
  \end{equation}
  Undoing the variable substitutions we obtain $X=L+P^*Q[-\sgn(\Lambda)]_+Q^*P$ as desired.
\end{proof}
\begin{rmk}\normalfont
  Computing the optimal $X$ requires 1 Cholesky factorization, 5 matrix multiplications and 1 eigenvector decomposition. The theoretical complexity of the Euclidean Linear Oracle can therefore be estimated as $O(N^3)$. On our machine, eigenvector  decomposition is approximately 8--12 times slower than matrix multiplication. So the total flop count is approximately $\approx \frac{1}{3} N^3 + 5\times 2N^3 + 20N^3 \approx 33N^3$.
\end{rmk}
An implementation of \efw for the computation of Riemannian centroids is shown in Algorithm~\ref{alg.efw}. Experimental results for \efw in comparison with \rfw and state-of-the-art Riemannian optimization methods can be found in the main text (see section~\ref{sec:exp-karcher}).
\begin{algorithm}[H]
  \caption{\efw for fast Geometric mean}
  \label{alg.efw}
  \begin{algorithmic}[1]
     \State $(A_1,\ldots,A_N)$, $\bm{w} \in \rplus^N$
     \State $\bar{X} \approx \argmin_{X > 0} \nlsum_iw_i \riem^2(X,A_i)$
     \State $\beta = \min_{1 \leq i \leq N} \lambda_{{\rm min}} (A_i)$
     \For {$k=0,1,\dots$}
     	\State Compute gradient: $\; \nabla\phi(X_k) = X_k^{-1}\bigl(\nlsum_iw_i \log(X_kA_i^{-1})\bigr)$
        \State Compute $Z_k$: $\; Z_k \gets \argmin_{H \preceq Z \preceq A}\ip{\nabla\phi(X_k)}{Z-X_k}$
        \State Let $\alpha_k \gets \frac{2}{k+2}$.
        \State Update $X$: $\; X_{k+1} \gets X_k + \alpha_k(Z_k - X_k)$.
     \EndFor
     \State \textbf{return} $\bar{X}=X_k$
   \end{algorithmic}
 \end{algorithm}

\section{Generating matrices}

\subsection{Positive Definite Matrices}
For testing our methods in the well-conditioned regime, we generate matrices $\lbrace A_i \rbrace_{i=1}^n \in \mathbb{P}_d$ by sampling real matrices of dimension $d$ uniformly at random $A_i \sim \mathcal{U}(\mathbb{R}^{d \times d})$
and multiplying each with its transpose
\begin{align*}
A_i &\gets A_i A_i^T \; .
\end{align*}
This gives well-conditioned, positive definite matrices.

Furthermore, we sample $m$ matrices $U_i \sim \mathcal{U}(\mathbb{R}^{d \times d})$ with a rank deficit, i.e. ${\rm rank}(U)<d$. Then, setting
\begin{align*}
B_i &\gets \delta I + U_i U_i^T \; 
\end{align*}
with $\delta$ being small yields ill-conditioned matrices.

\subsection{Special Orthogonal group}
First, we use a publicly available implementation by Ofek Shilon\footnote{https://www.mathworks.com/matlabcentral/fileexchange/11783-randorthmat} to sample uniformly from the manifold of orthogonal matrices, i.e., $\lbrace A_i \rbrace_{i=1}^n \in O(d)$. The method generates uniformly distributed orthogonal matrices by applying the Gram-Schmidt process to random matrices. All matrices in our sample will have determinant $\det(A_i)=\pm 1$. To map our sample to the special orthogonal group (with determinant 1), we multiply the first row of all matrices with negative determinant by $-1$: Let $a_{1*}^i$ be the first row of $A_i$. If $\det(A_i)=-1$, set
\begin{align*}
a_{1*}^i \gets -a_{1*}^i \; .
\end{align*}

\section{Additional Experiments}
\subsection{Comparison against classic gradient-based methods}
In addition to the comparison with state-of-the-art methods in the main text, we compared against two classic gradient-based methods:
\begin{enumerate}
\item \emph{Steepest Decent (SD)} is an iterative, first order minimization algorithm with line search that has risen to great popularity in machine learning. In each iteration, the method evaluates the gradient of the function and steps towards a local minimum proportional to the negative of the gradient. We use an implementation provided by the toolbox \emph{Manopt}~\citep{manopt}.
\item \emph{Conjugate Gradient (CG)} is an iterative optimization algorithm with plane search, i.e. instead of evaluating the gradient one evaluates a linear combination of the gradient and the descent step of the previous iteration. We again use an implementation provided by  \emph{Manopt}.
\end{enumerate}
\begin{figure}[!htbp]
\centering
    \includegraphics[scale=0.13]{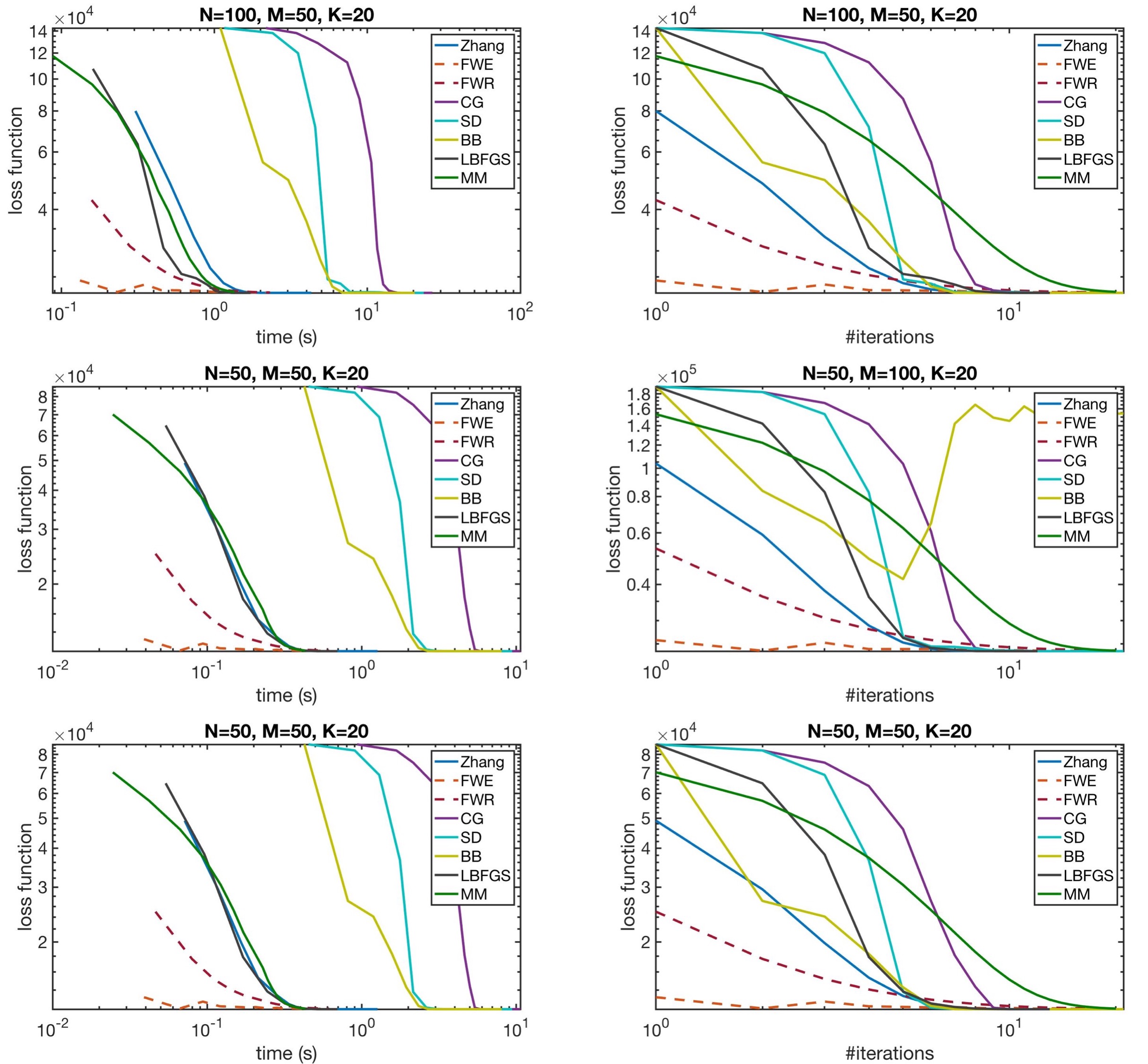} 
     \caption{Performance of \efw and \rfw in comparison with state-of-the-art methods for well-conditioned inputs of different sizes ($N$: size of matrices, $M$: number of matrices, $K$: maximum number of iterations). All tasks are initialized with the harmonic mean $x_0=HM$.}
     \label{fig:all}
\end{figure}
In Fig.~\ref{fig:all}, we report the loss both with respect to CPU time and with respect to the number of iterations. This allows for a more objective comparison with implementations from the toolbox Manopt (see discussion in the main text).

\subsection{Additional comparison with state-of-the-art methods}
For a more objective comparison, we include here also the loss with respect to the number of iterations for (i) different parameter choices (Fig.~\ref{fig:param-it}) and (ii) different initializations (Fig.~\ref{fig:init-it}).
\begin{figure}[!htbp]
\centering
    \includegraphics[scale=0.13]{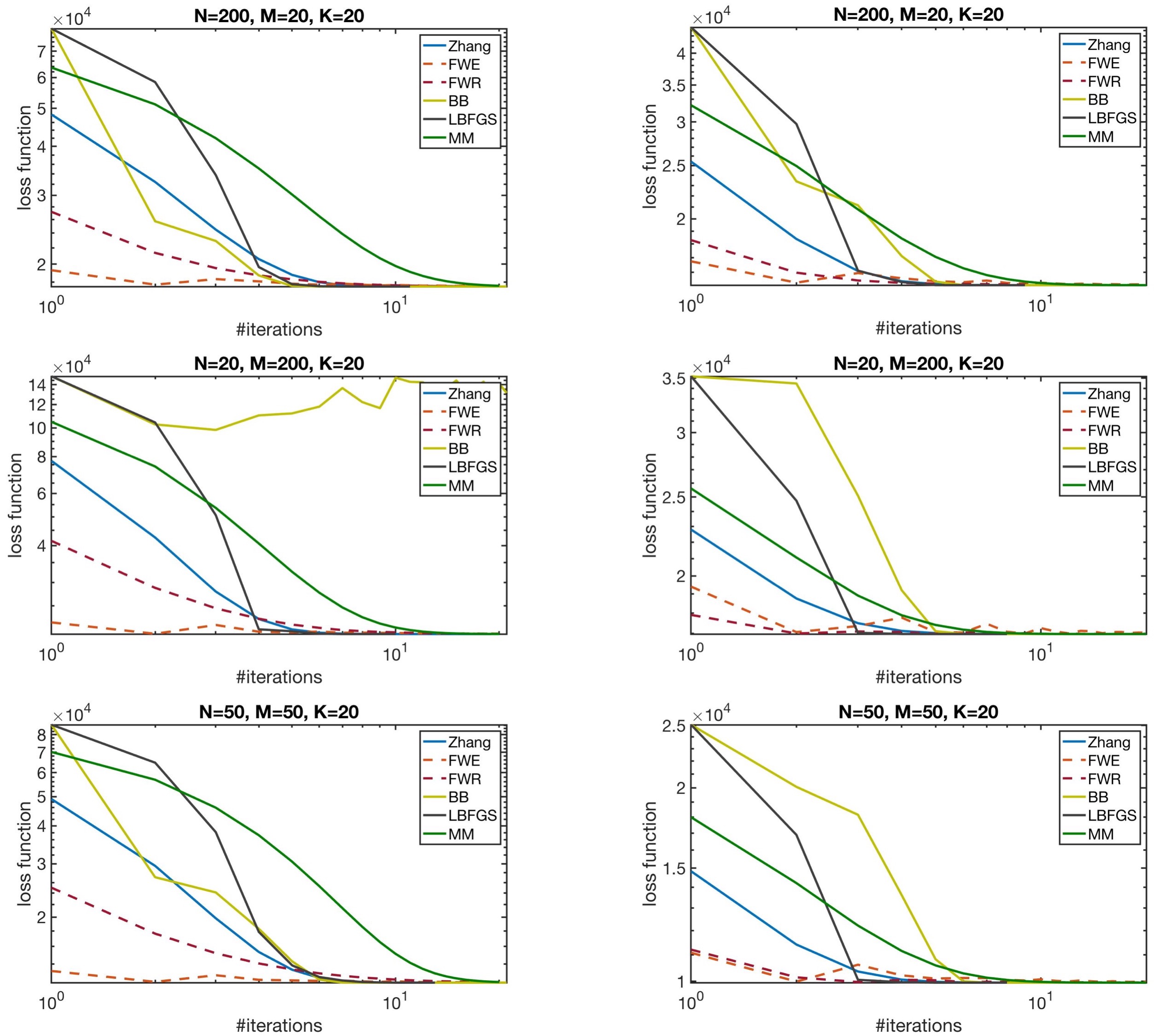} 
     \caption{Performance of \efw and \rfw in comparison with state-of-the-art methods for well-conditioned (left) and ill-conditioned (right) inputs of different sizes ($N$: size of matrices, $M$: number of matrices, $K$: maximum number of iterations). All tasks are initialized with the harmonic mean $x_0=HM$.}
     \label{fig:param-it}
\end{figure}
\begin{figure}[!htbp]
\centering
    \includegraphics[scale=0.15]{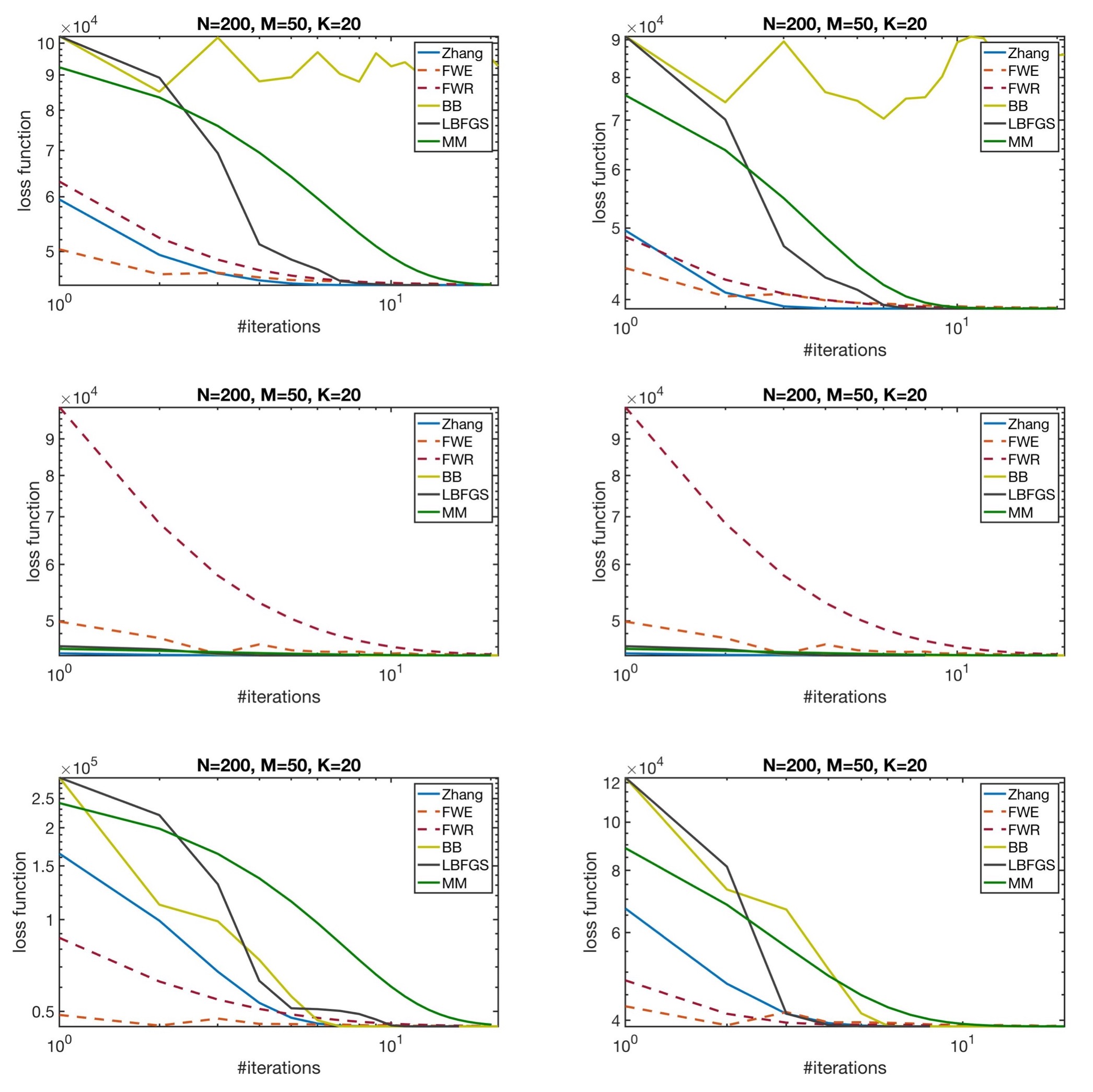} 
     \caption{Performance of \efw and \rfw in comparison with state-of-the-art methods for well-conditioned (left) and ill-conditioned (right) inputs with different initializations: $x_0=A_1$ for $A_1 \in A$ (left), $x_0=\frac{1}{2}(AM + HM)$ (middle) and $x_0=HM$ (right).}
     \label{fig:init-it}
\end{figure}
\begin{figure}
\begin{center}
\begin{tabular}[b]{lrr}\hline
     \textbf{Method} & \textbf{\# calls to grad} & \textbf{\# calls to cost} \\ \hline
      \efw & 30 & 0 \\
      \rfw & 30 & 0 \\
      RBB &  17 &  35 \\
      R-LBFGS & 30 & 49 \\
      MM & 30 & 0 \\
      Zhang & 30 &  0 \\
      \hline
    \end{tabular}
    \caption{Number of calls to the gradient and cost functions until reaching convergence, averaged over ten experiments with $N=40$, $M=10$ and $K=30$. Note, that \efw/ \rfw outperform most other methods in this example which is partially due to avoiding internal calls to the cost function, significantly increasing the efficiency of both methods.}
    \label{tab:calls}
    \end{center}
\end{figure}

\end{document}